\theoremstyle{plain}
\newtheorem{theorem}{Theorem}[section]
\newtheorem{lemma}[theorem]{Lemma}
\theoremstyle{definition}
\newtheorem{definition}[theorem]{Definition}
\theoremstyle{remark}
\newtheorem{remark}[theorem]{Remark}
\DeclareMathOperator{\divv}{div}
\DeclareMathOperator*{\esssup}{ess\:sup}
\DeclareMathOperator{\trace}{trace}
\DeclareMathOperator*{\supp}{supp}
\DeclareMathOperator*{\meas}{meas}
\title{Vanishing viscosity limit for the compressible Navier--Stokes system\\via measure--valued solutions}
\author{Danica Basari\'{c}}
\date{}
\begin{document}
	\maketitle
	
	\begin{center}
		Technische Universit\"{a}t Berlin \\
		Institute f\"{u}r Mathematik, Stra{\ss}e des 17. Juni 136, 10623 Berlin, Germany 
	\end{center}

	\begin{center}
		E-mail address: basaric@math.tu-berlin.de
	\end{center}
	
	\begin{abstract}
		We identify a class of \textit{measure--valued} solutions of the barotropic Euler system 
		on a general (unbounded) spatial domain as a vanishing viscosity limit for the compressible Navier--Stokes system. Then we establish 
		the weak (measure--valued)--strong uniqueness principle, and, as a corollary, we obtain strong convergence to the Euler system on the lifespan of the strong solution.
	\end{abstract}
	
	We consider the compressible Euler system with damping 
	\begin{equation} \label{continuity equation in function of the momentum}
		\partial_t \varrho + \divv_x \textbf{m}=0,
	\end{equation}
	\begin{equation} \label{momentum equation in function of the momentum}
		\partial_t \textbf{m} + \divv_x \left(\frac{\textbf{m}\otimes \textbf{m}}{\varrho}\right) + \nabla_x p(\varrho) +a\textbf{m}=0;
	\end{equation}
	here $\varrho=\varrho(t,x)$ denotes the density, $\textbf{m}=\textbf{m}(t,x)$ the momentum - with the convection that the convective term is equal to zero whenever $\varrho=0$ - and $p=p(\varrho)$ the pressure. The term $a \textbf{m}$, with $a\geq0$, represents ``friction''. We will study the system on the set $(t,x) \in (0,T)\times \Omega$, where $T>0$ is a fixed time, $\Omega \subseteq \mathbb{R}^N$ with $N=2,3$, can be a bounded or unbounded domain, along with the boundary condition
	\begin{equation} \label{boundary condition}
		\textbf{m} \cdot \textbf{n}|_{\partial \Omega} = 0,
	\end{equation}
	for all $t\in [0,T]$; if $\Omega$ is unbounded, we impose the condition at infinity
	\begin{equation} \label{conditions at infinity}
		\varrho\rightarrow \overline{\varrho}, \quad \textbf{m}  \rightarrow 0 \quad \mbox{as } |x|\rightarrow \infty,
	\end{equation}
	with $\overline{\varrho}\geq 0$. We also consider the following initial data 
	\begin{equation} \label{initial conditions}
		\varrho(0,\cdot) =\varrho_0, \quad \textbf{m}(0,\cdot)=\textbf{m}_0,
	\end{equation}
	with $\varrho_0 >0$. We finally assume that the pressure $p$ is given by the isentropic state equation
	\begin{equation} \label{pressure}
		p(\varrho)=A \varrho^{\gamma},
	\end{equation}
	where $\gamma >1$ is the adiabatic exponent and $A>0$ is  a constant.
	
	Our goal is to identify a class of generalized - \emph{dissipative measure valued (DMV)} solutions - for the 
	Euler system (\ref{continuity equation in function of the momentum}), (\ref{momentum equation in function of the momentum})
	as a vanishing viscosity limit of the Navier--Stokes equations. More specifically, we start considering the set
	\begin{equation*}
		\Omega_R= \Omega \cap B_R, \quad B_R= \{ x\in \mathbb{R}^N: |x|<R \},
	\end{equation*}
	where we assume $\Omega_R$ to be at least a Lipschitz domain, and we consider the Navier--Stokes system:
	\begin{equation} \label{continuity equation N-S}
		\partial_t \varrho + \divv_x(\varrho \textbf{u}) =0,
	\end{equation}
	\begin{equation} \label{balance of momentum N-S}
		\partial_t(\varrho \textbf{u}) + \divv_x (\varrho \textbf{u}\otimes \textbf{u}) +\nabla_x p(\varrho)= \frac{1}{R} \divv_x \mathbb{S}(\nabla_x \textbf{u})-a\varrho \textbf{u};
	\end{equation}
	now $\textbf{u}=\textbf{u}(t,x)$ is the velocity and $\mathbb{S}= \mathbb{S}(\nabla_x \textbf{u})$ is the viscous stress, which we assume to be a linear function of the velocity gradient, more specifically to satisfy the Newton's rheological law 
	\begin{equation} \label{viscous stress}
		\mathbb{S}=\mathbb{S}(\nabla_x \textbf{u})= \mu \left( \nabla_x \textbf{u} + \nabla_x^T \textbf{u} -\frac{2}{N} (\divv_x \textbf{u}) \mathbb{I}\right)+ \eta(\divv_x \textbf{u})\mathbb{I},
	\end{equation}
	where $\mu >0$, $\eta \geq 0$ are constants. Introducing $\lambda=\eta -\frac{2}{N}\mu$ we also have
	\begin{equation} \label{alternative versione of the viscous stress}
		\mathbb{S}(\nabla_x \textbf{u})= \mu (\nabla_x \textbf{u} + \nabla_x^T \textbf{u}) + \lambda (\divv_x \textbf{u})\mathbb{I}, \quad \mu >0, \lambda \geq -\frac{2}{N}\mu.
	\end{equation}
	
	As our goal is to perform the vanishing viscosity limit for the Navier--Stokes system, we impose the complete slip boundary conditions on $\partial \Omega$:
	\begin{equation} \label{compl}
		\textbf{u} \cdot \textbf{n}|_{\partial \Omega} = 0,\ (\mathbb{S} \cdot \textbf{n}) \times \textbf{n}|_{\partial \Omega} = 0,
	\end{equation}
	and the no--slip boundary conditions on $\partial B_R$:
	\begin{equation} \label{boundary condition N-S}
		\textbf{u}|_{\partial B_R}=0,
	\end{equation}
	for all $t\in [0,T]$. Of course, \eqref{compl} and \eqref{boundary condition N-S} are compatible only if 
	$\partial B_R \cap \partial \Omega = \emptyset$ for $R$ large enough meaning that $\partial \Omega$ is a compact set. That is $\Omega$ is either (i) bounded, or (ii) exterior domain, or (iii) $\Omega = R^N$. For the sake of simplicity, we restrict ourselves to these three cases.
	
	Finally, we impose the initial conditions:
	\begin{equation} \label{initial conditions N-S}
		\varrho(0,\cdot) =\varrho_0, \quad (\varrho\textbf{u})(0,\cdot)= (\varrho\textbf{u})_0 \quad \mbox{in }\Omega_R.
	\end{equation}
	
	Our goal will be first to show that the solutions of the Navier--Stokes system converge to the measure-valued solution of the Euler system with damping in the zero viscosity limit, then we will prove the weak-strong uniqueness principle for the Euler system, see \hyperref[theorem weak-strong uniqueness]{Theorem \ref*{theorem weak-strong uniqueness}}. Then we conclude that solutions of the Navier--Stokes system converge to smooth solution of the Euler system as long as the latter exists, see \hyperref[Thvvl]{Theorem \ref*{Thvvl}}. Note that the vanishing viscosity limit for the compressible Navier--Stokes system on a bounded domain was studied by Sueur \cite{Sue1}. Our goal is to propose an alternative approach based on the concept of dissipative measure--valued solutions and extend the result to a more general class of domains. The concept of dissipative measure--valued solution is of independent interest and has been use recently in the analysis of convergence of certain numerical schemes, 
	see \cite{FeiLukMiz}.
	
	\section{From the Navier--Stokes to the Euler system}
	\subsection{Weak formulation}
	
	To get the weak formulation of the Navier--Stokes system, we simply multiply both equations \eqref{continuity equation N-S}, \eqref{balance of momentum N-S} by test functions, and, supposing also that the density $\varrho$ and the momentum $\varrho \textbf{u}$ are weakly continuous in time, we get
	\begin{equation} \label{weak formulation continuity equation N-S}
		\left[\int_{\Omega_R} \varrho\varphi(t,\cdot) dx\right]_{t=0}^{t=\tau} = \int_{0}^{\tau} \int_{\Omega_R} [\varrho\partial_t\varphi +\varrho\textbf{u}\cdot \nabla_x \varphi] dxdt,
	\end{equation}
	for any $\tau \in [0,T)$ and all $\varphi \in C^1_c([0,T]\times \overline{\Omega}_R)$, and
	\begin{equation} \label{weak formulation momentum balance N-S}
		\left[\int_{\Omega_R} \varrho\textbf{u} \cdot \bm{\varphi}(t,\cdot) dx \right]_{t=0}^{t=\tau} = \int_{0}^{\tau} \int_{\Omega_R} \left[ \varrho\textbf{u} \cdot \partial_t \bm{\varphi} +\varrho \textbf{u}\otimes \textbf{u}: \nabla_x \bm{\varphi} + p(\varrho) \mbox{div}_x \bm{\varphi} - \frac{1}{R} \mathbb{S}(\nabla_x \textbf{u}): \nabla_x \bm{\varphi}- a\varrho\textbf{u}\cdot \bm{\varphi}\right] dx dt,
	\end{equation}
	for any $\tau \in [0,T)$ and all $\bm{\varphi} \in C^1_c([0,T]\times \overline{\Omega} \cap B_R; \mathbb{R}^N)$ with $\bm{\varphi}\cdot \textbf{n}|_{\partial \Omega}=0$.
	
	Multiplying \eqref{balance of momentum N-S} by $\textbf{u}$ and introducing the \textit{pressure potential} $P$ as the solution of the equation
	\begin{equation*}
		\varrho P'(\varrho) -P(\varrho)=p(\varrho),
	\end{equation*}
	which, for instance, in our case can be taken as
	\begin{equation*}
		P(\varrho)= \varrho \int_{\overline{\varrho}}^{\varrho} \frac{p(z)}{z^2} dz,
	\end{equation*}
	(notice in particular that $P(\overline{\varrho})=0$; this will be used later) we get the energy equality
	\begin{equation} \label{energy equality N-S}
		\frac{d}{dt} \int_{\Omega_R} \left[ \frac{1}{2}\varrho |\textbf{u}|^2 +  P(\varrho)\right] dx + a \int_{\Omega_R}\varrho |\textbf{u}|^2 dx + \frac{1}{R}\int_{\Omega_R} \mathbb{S}(\nabla_x \textbf{u}): \nabla_x \textbf{u} dx =0,
	\end{equation}
	from which the \textit{energy inequality} follows
	\begin{align} \label{energy inequality N-S}
		\int_{\Omega_R} \left[ \frac{1}{2}\varrho |\textbf{u}|^2 +  P(\varrho)\right] (\tau,\cdot) dx + a \int_{0}^{\tau} \int_{\Omega_R}\varrho |\textbf{u}|^2 dx dt &+\frac{1}{R} \int_{0}^{\tau} \int_{\Omega_R} \mathbb{S}(\nabla_x \textbf{u}): \nabla_x \textbf{u} dxdt \\
		&\leq \int_{\Omega_R} \left[ \frac{1}{2\varrho_0}|(\varrho\textbf{u})_0|^2 + P(\varrho_0)\right] dx,
	\end{align}
	for a.e. $\tau \in [0,T]$. For more details see \cite{FeiKarPok}.
	
	\subsection{Existence of weak solutions}
	Now, we have the following result (for the proof see \cite{FeiKarPok}).
	\begin{theorem}
		Let $\Omega_R \subset \mathbb{R}^N$ be a Lipschitz domain with compact boundary $\Omega_R = \Omega \cap B_R$, $\partial \Omega \cap \partial B_R = \emptyset$, and let $T>0$ be arbitrary. Suppose that the initial data satisfy
		\begin{equation*}
			\varrho_0 \in L^{\gamma}(\Omega_R), \quad \varrho_0\geq 0\mbox{ a.e. in } \Omega_R, \quad \frac{|(\varrho \textbf{u})_0|^2}{\varrho_0} \in L^1(\Omega_R).
		\end{equation*}
		Let the pressure $p$ satisfy \eqref{pressure} with
		\begin{equation*}
			\gamma > \frac{N}{2}.
		\end{equation*}
		Then the Navier--Stokes system \eqref{continuity equation N-S}-\eqref{initial conditions N-S} admits a weak solution $[\varrho, \textbf{u}]$ in $(0,T)\times \Omega_R$ such that
		\begin{enumerate}
			\item the density $\varrho=\varrho(t,x)$ is a non-negative function a.e. in $(0,T)\times \Omega_R$ and satisfies
			\begin{equation*}
				\varrho \in C_{weak}([0,T]; L^{\gamma}(\Omega_R));
			\end{equation*}
			the velocity $\textbf{u}=\textbf{u}(t,x)$ satisfies
			\begin{equation*}
				\textbf{u}\in L^2(0,T; W^{1,2}(\Omega_R; \mathbb{R}^N)), 
				\ \textbf{u} \cdot \textbf{n}|_{\partial \Omega} = 0, \ \textbf{u}|_{\partial B_R} = 0 ;
			\end{equation*}
			the momentum $\varrho \textbf{u}=(\varrho \textbf{u})(t,x)$ satisfies
			\begin{equation*}
				\varrho \textbf{u} \in C_{weak}([0,T];L^{\frac{2\gamma}{\gamma+1}}(\Omega_R;\mathbb{R}^N));
			\end{equation*}
			\item the weak formulations of the continuity equation \eqref{weak formulation continuity equation N-S} and of the momentum balance \eqref{weak formulation momentum balance N-S} are satisfied in $(0,T)\times \Omega_R$;
			\item the energy inequality \eqref{energy inequality N-S} holds for a.e. $\tau \in [0,T]$.
		\end{enumerate}
	\end{theorem}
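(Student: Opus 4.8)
The plan is to follow the by-now classical three-level approximation scheme of Lions and Feireisl--Novotn\'y, adapted to the mixed boundary conditions \eqref{compl}--\eqref{boundary condition N-S}. First I would regularize \eqref{continuity equation N-S}--\eqref{balance of momentum N-S} by adding artificial viscosity to the continuity equation and an artificial pressure to the momentum balance,
\begin{equation*}
	\partial_t\varrho + \divv_x(\varrho\mathbf{u}) = \varepsilon \Delta_x\varrho, \qquad
	\partial_t(\varrho\mathbf{u}) + \divv_x(\varrho\mathbf{u}\otimes\mathbf{u}) + \nabla_x p(\varrho) + \delta\nabla_x\varrho^\beta + \varepsilon\nabla_x\mathbf{u}\,\nabla_x\varrho = \tfrac{1}{R}\divv_x\mathbb{S}(\nabla_x\mathbf{u}) - a\varrho\mathbf{u},
\end{equation*}
with $\beta$ large and $\varepsilon,\delta>0$, complemented by the homogeneous Neumann condition $\nabla_x\varrho\cdot\mathbf{n}|_{\partial\Omega_R}=0$ and the velocity boundary conditions \eqref{compl}, \eqref{boundary condition N-S}. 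The velocity is sought in a Faedo--Galerkin basis of $X_R=\{\mathbf{v}\in W^{1,2}(\Omega_R;\mathbb{R}^N):\mathbf{v}\cdot\mathbf{n}|_{\partial\Omega}=0,\ \mathbf{v}|_{\partial B_R}=0\}$; here the hypothesis $\partial\Omega\cap\partial B_R=\emptyset$ ensures that the slip and no-slip conditions are prescribed on disjoint closed pieces of $\partial\Omega_R$, so $X_R$ is a well-defined closed subspace of $W^{1,2}$ and, because of the no-slip part, Korn's inequality holds on $X_R$ with no need to quotient out rigid velocity fields. For $\mathbf{u}$ frozen in the Galerkin space one solves the uniformly parabolic problem for $\varrho=\varrho[\mathbf{u}]\geq 0$ by maximal regularity, inserts it back, and solves the resulting ODE system for the Galerkin coefficients locally in time by Banach's fixed point, extending it globally thanks to the approximate energy balance.

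One then performs the three successive limit passages $n\to\infty$, $\varepsilon\to0$, $\delta\to0$, each driven by the bounds extracted from the energy inequality: $\sqrt{\varrho}\,\mathbf{u}$ bounded in $L^\infty(0,T;L^2(\Omega_R))$, $\mathbf{u}$ bounded in $L^2(0,T;W^{1,2}(\Omega_R))$ via the $\tfrac1R\mathbb{S}$ term together with Korn, $\varrho$ bounded in $L^\infty(0,T;L^\gamma(\Omega_R))$ with the additional $\delta$-controlled bound in $L^\infty(0,T;L^\beta)$ and the $\varepsilon$-dissipation $\varepsilon\int_0^T\int_{\Omega_R}|\nabla_x\varrho|^2$. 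The Galerkin limit $n\to\infty$ is the mildest, the density equation being uniformly parabolic so that $\varrho_n$ is compact in space and time. The limit $\varepsilon\to0$ requires the renormalized formulation of the continuity equation and the \emph{effective viscous flux identity}, i.e.\ the weak sequential continuity of $p(\varrho)\,\divv_x\mathbf{u}-\tfrac1R(2\mu+\lambda)\,\divv_x\mathbf{u}\,\divv_x\mathbf{u}$, to identify the weak limit of the pressure.

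The main obstacle is the last passage $\delta\to0$, where one needs the density to be slightly more integrable than $L^\gamma$ in order to eliminate $\delta\varrho^\beta$ and to obtain equi-integrability of $\varrho^\gamma$. This is the point where the hypothesis $\gamma>N/2$ enters: testing the momentum equation with $\bm{\varphi}=\mathcal{B}[\varrho^\theta-\langle\varrho^\theta\rangle_{\Omega_R}]$, where $\mathcal{B}$ is the Bogovskii operator on the bounded Lipschitz domain $\Omega_R$, yields $\varrho\in L^{\gamma+\theta}((0,T)\times\Omega_R)$ for some $\theta=\theta(\gamma,N)>0$ precisely under that restriction. Combining this improved estimate with the effective viscous flux identity and, when $\gamma<2$, with Feireisl's bound on the oscillation defect measure, one shows that the renormalized continuity equation is preserved in the limit and deduces strong convergence $\varrho_\delta\to\varrho$ in $L^1((0,T)\times\Omega_R)$; this allows one to pass to the limit in $p(\varrho_\delta)$ and to discard $\delta\varrho_\delta^\beta$. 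It then remains to check that the limit $[\varrho,\mathbf{u}]$ lies in the regularity class of item (1) (the weak time-continuity following from the equations together with the uniform bounds, via an Arzel\`a--Ascoli argument in negative Sobolev spaces), satisfies the weak formulations \eqref{weak formulation continuity equation N-S}, \eqref{weak formulation momentum balance N-S}, and fulfils the energy inequality \eqref{energy inequality N-S} for a.e.\ $\tau$ by weak lower semicontinuity of the convex energy. The only genuinely domain-dependent care is in the function-space setup (the mixed boundary and, when $\Omega$ is an exterior domain or $\mathbb{R}^N$, the behaviour as $|x|\to\infty$), but since the whole construction takes place on the bounded set $\Omega_R$ with compact boundary, this does not affect the core of the argument.
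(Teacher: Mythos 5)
Your outline is essentially the proof the paper relies on: the paper does not prove this theorem itself but cites Feireisl--Karper--Pokorn\'y \cite{FeiKarPok}, whose argument is exactly the three-level Lions--Feireisl scheme (Faedo--Galerkin with artificial viscosity and artificial pressure, effective viscous flux, Bogovskii-operator pressure estimates under $\gamma>N/2$, and the oscillation defect measure for small $\gamma$) adapted to the mixed slip/no-slip boundary conditions as you describe. Your proposal is correct and matches that route.
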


	\subsection{Reformulation of the problem in terms of a background density $\overline{\varrho}$}
	
	Choosing a background density $\overline{\varrho}\geq 0$, we can slightly change the energy inequality; indeed the Navier--Stokes system can be rewritten as
	\begin{equation*}
		\partial_t(\varrho - \overline{\varrho}) + \divv_x(\varrho \textbf{u})=0,
	\end{equation*}
	\begin{equation*}
		\partial_t(\varrho \textbf{u}) + \divv_x (\varrho \textbf{u}\otimes \textbf{u}) +\nabla_x[p(\varrho)-p(\overline{\varrho})] = \frac{1}{R} \divv_x \mathbb{S}(\nabla_x \textbf{u})-a\varrho \textbf{u}.
	\end{equation*}
	Again, multiplying both equations by test functions and using the weak continuity in time we obtain 
	\begin{equation*}
		\left[\int_{\Omega_R} (\varrho-\overline{\varrho})\varphi(t,\cdot) dx\right]_{t=0}^{t=\tau} = \int_{0}^{\tau} \int_{\Omega_R} [(\varrho-\overline{\varrho})\partial_t\varphi +\varrho\textbf{u}\cdot \nabla_x \varphi] dxdt,
	\end{equation*}
	for any $\tau \in [0,T)$ and all $\varphi \in C^1_c([0,T]\times \overline{\Omega}_R)$, and
	\begin{equation*}
		\left[\int_{\Omega_R} \varrho\textbf{u} \cdot \bm{\varphi}(t,\cdot) dx \right]_{t=0}^{t=\tau} = \int_{0}^{\tau} \int_{\Omega_R} \left[ \varrho\textbf{u} \cdot \partial_t \bm{\varphi} +\varrho \textbf{u}\otimes \textbf{u}: \nabla_x \bm{\varphi} + [p(\varrho)-p(\overline{\varrho})] \mbox{div}_x \bm{\varphi} - \frac{1}{R} \mathbb{S}(\nabla_x \textbf{u}): \nabla_x \bm{\varphi}- a\varrho\textbf{u}\cdot \bm{\varphi}\right] dx dt,
	\end{equation*}
	for any $\tau \in [0,T)$ and all $\bm{\varphi} \in C^1_c([0,T]\times \overline{\Omega} \cap B_R; \mathbb{R}^N)$ with $\bm{\varphi}\cdot \textbf{n}|_{\partial \Omega}=0$.
	Also, integrating the first equation over $\Omega_R$ along with condition \eqref{boundary condition N-S}, we get
	\begin{equation*}
		\frac{d}{dt} \int_{\Omega_R} (\varrho-\overline{\varrho}) dx=0 \quad \Rightarrow \quad  \frac{d}{dt} \int_{\Omega_R} P'(\overline{\varrho}) (\varrho-\overline{\varrho}) dx=0.
	\end{equation*}
	Since $P(\overline{\varrho})=0$, we can rewrite \eqref{energy equality N-S} as
	\begin{equation*}
		\frac{d}{dt} \int_{\Omega_R} \left[ \frac{1}{2}\varrho |\textbf{u}|^2 +  P(\varrho) -P'(\overline{\varrho})(\varrho- \overline{\varrho})- P(\overline{\varrho})\right] dx + a \int_{\Omega_R}\varrho |\textbf{u}|^2 dx + \frac{1}{R}\int_{\Omega_R} \mathbb{S}(\nabla_x \textbf{u}): \nabla_x \textbf{u} dx =0,
	\end{equation*}
	and \eqref{energy inequality N-S} becomes
	\begin{align*}
		\int_{\Omega_R} \left[ \frac{1}{2}\varrho |\textbf{u}|^2 +  P(\varrho)-P'(\overline{\varrho})(\varrho- \overline{\varrho})- P(\overline{\varrho})\right] (\tau,\cdot) dx &+ a \int_{0}^{\tau} \int_{\Omega_R}\varrho |\textbf{u}|^2 dx dt + \frac{1}{R}\int_{0}^{\tau} \int_{\Omega_R} \mathbb{S}(\nabla_x \textbf{u}): \nabla_x \textbf{u} dxdt \\
		&\leq \int_{\Omega_R} \left[ \frac{1}{2\varrho_0}|(\varrho\textbf{u})_0|^2 + P(\varrho_0)-P'(\overline{\varrho})(\varrho_0- \overline{\varrho})- P(\overline{\varrho})\right] dx,
	\end{align*}
	for a.e. $\tau \in [0,T]$.
	
	\subsection{Weak sequential stability} \label{WSS}
	We can then consider the family $\{ \varrho_R-\overline{\varrho}, \textbf{m}_R= \varrho_R \textbf{u}_R\}_{R>0}$ of dissipative weak solutions to the previous Navier--Stokes system with initial data $\{ \varrho_{R,0}-\overline{\varrho}, \textbf{m}_{R,0} \}_{R>0}$ defined in $(0,T)\times \Omega$; in particular, they will satisfy the following conditions:
	\begin{equation} \label{weak formulation continuity equation Navier-Stokes}
		\int_{0}^{T} \int_{\Omega} [(\varrho_R-\overline{\varrho})\partial_t\varphi +\varrho_R\textbf{u}_R\cdot \nabla_x \varphi] dxdt =0,
	\end{equation}
	for all $\varphi \in C^1_c((0,T)\times \Omega)$, and
	\begin{equation} \label{weak formulation balance of momentum Navier-Stokes}
		\int_{0}^{T} \int_{\Omega} \left[ \varrho_R\textbf{u}_R \cdot \partial_t \bm{\varphi} +\varrho_R \textbf{u}_R\otimes \textbf{u}_R: \nabla_x \bm{\varphi} + [p(\varrho_R)-p(\overline{\varrho})] \mbox{div}_x \bm{\varphi} - \frac{1}{R} \mathbb{S}(\nabla_x \textbf{u}_R): \nabla_x \bm{\varphi}- a\varrho_R\textbf{u}_R\cdot \bm{\varphi} \right] dx dt =0
	\end{equation}
	for all $\bm{\varphi} \in C^1_c((0,T)\times \Omega; \mathbb{R}^N)$. We have replaced $\Omega_R$ by $\Omega$ in the previous integrals. Note that this is correct for $R$ large enough as the test functions are compactly supported in $\Omega_R$. 
	
	More precisely, thanks to the weak continuity of the densities and momenta, we have 
	\begin{equation*}
		\left[\int_{\Omega} (\varrho_R-\overline{\varrho})\varphi(t,\cdot) 	dx\right]_{t=0}^{t=\tau} = \int_{0}^{\tau} \int_{\Omega} [(\varrho_R-\overline{\varrho})\partial_t\varphi +\varrho_R\textbf{u}_R\cdot \nabla_x \varphi ] dxdt,
	\end{equation*}
	for any $\tau \in [0,T)$ and all $\varphi \in C^1_c([0,T]\times \overline{\Omega})$,
	\begin{align*} 
		\left[\int_{\Omega} \varrho_R\textbf{u}_R \cdot \bm{\varphi}(t,\cdot) dx \right]_{t=0}^{t=\tau} &= \int_{0}^{\tau} \int_{\Omega} \left[ \varrho_R\textbf{u}_R \cdot \partial_t \bm{\varphi} +\varrho_R \textbf{u}_R\otimes \textbf{u}_R: \nabla_x \bm{\varphi} + (p(\varrho_R)-p(\overline{\varrho})
		) \mbox{div}_x \bm{\varphi} \right] dx dt, \\
		& -\int_{0}^{\tau} \int_{\Omega} \left[ \frac{1}{R} \mathbb{S}(\nabla_x \textbf{u}_R): \nabla_x \bm{\varphi}+ a\varrho_R\textbf{u}_R\cdot \bm{\varphi} \right] dxdt
	\end{align*}
	for any $\tau \in [0,T)$ and all $\bm{\varphi} \in C^1_c([0,T]\times \overline{\Omega}; \mathbb{R}^N)$,
	$\bm{\varphi} \cdot \textbf{n}|_{\partial \Omega} = 0$.
	
	Finally, we have the energy inequality
	\begin{equation} \label{energy inequality}
		\begin{aligned}
			\int_{\Omega} \left[ \frac{1}{2}\varrho_R |\textbf{u}_R|^2 +  P(\varrho_R)-P'(\overline{\varrho})(\varrho_R- \overline{\varrho})- P(\overline{\varrho})\right] &(\tau,\cdot)dx + a \int_{0}^{\tau} \int_{\Omega}\varrho_R |\textbf{u}_R|^2 dx dt + \frac{1}{R}\int_{0}^{\tau} \int_{\Omega} \mathbb{S}(\nabla_x \textbf{u}_R): \nabla_x \textbf{u}_R dxdt \\
			&\leq \int_{\Omega} \left[ \frac{1}{2\varrho_{R,0}}|(\varrho\textbf{u})_{R,0}|^2 + P(\varrho_{R,0})-P'(\overline{\varrho})(\varrho_{R,0}- \overline{\varrho})- P(\overline{\varrho})\right] dx,
		\end{aligned}
	\end{equation}
	for a.e. $\tau \in [0,T]$. In \eqref{energy inequality}, we suppose that the initial data have been chosen on $\Omega$ in such a way that
	\begin{equation} \label{bound on the initial energy}
		\int_{\Omega} \left[ \frac{1}{2\varrho_{R,0}}|(\varrho\textbf{u})_{R,0}|^2 + P(\varrho_{R,0})-P'(\overline{\varrho})(\varrho_{R,0}- \overline{\varrho})- P(\overline{\varrho})\right] dx \leq E_0,
	\end{equation}
	where the constant $E_0$ is independent of $R$. Then, extending $\textbf{u}_R$ to be zero and $\varrho_R$ as $\overline{\varrho}$ outside $B_R$, we easily deduce from the energy inequality that
	\begin{equation} \label{bound momentum sqrt density}
		\esssup_{t\in(0,T)} \| \sqrt{\varrho_R} \textbf{u}_R(t,\cdot) \|_{L^2(\Omega;\mathbb{R}^N)} \leq c(E_0),
	\end{equation}
	\begin{equation} \label{potential bound}
		\esssup_{t\in(0,T)} \| (P(\varrho_R)-P'(\overline{\varrho})(\varrho_R- \overline{\varrho})- P(\overline{\varrho}))(t,\cdot) \|_{L^1(\Omega)} \leq c(E_0),
	\end{equation}
	\begin{equation} \label{viscous stress estimate}
		\frac{1}{R}\int_{0}^{T} \int_{\Omega} \mathbb{S}(\nabla_x \textbf{u}_R): \nabla_x \textbf{u}_R dxdt \leq c(E_0),
	\end{equation}
	where the bounds are independent of $R$. Next, from \eqref{viscous stress estimate}, we can deduce that
	\begin{equation} \label{bound on the terms of the viscous stress tensor}
		\frac{1}{R}\int_{0}^{T} \| \mathbb{S} (\nabla_x \textbf{u}_R) (t,\cdot) \|_{L^2(\Omega; \mathbb{R}^N \times \mathbb{R}^N)}^2 dt \leq c(E_0).
	\end{equation}
	
	Now, we can use the following relation
	\begin{equation*}
		P(\varrho)-P'(\overline{\varrho})(\varrho-\overline{\varrho})-P(\overline{\varrho}) \geq c(\overline{\varrho}) \begin{cases}
			(\varrho-\overline{\varrho})^2 \quad &\mbox{for } \frac{\overline{\varrho}}{2} < \varrho < 2\overline{\varrho} \\
			(1+\varrho^{\gamma}) \quad &\mbox{otherwise},
		\end{cases}
	\end{equation*}
	for a positive constant $c(\overline{\varrho})$ (see \cite{FeiJinNov}). Following \cite{FeiNov}, we introduce the decomposition of an integrable 
	function $h_R$:
	\begin{equation*}
		h_R=[h_R]_{ess} + [h_R]_{res},
	\end{equation*}
	where
	\begin{equation*}
		[h_R]_{ess}=\chi(\varrho_R)h_R, \quad [h_R]_{res}= (1-\chi(\varrho_R)) h_R,
	\end{equation*}
	\begin{equation*}
		\chi \in C_c^{\infty}(0,\infty), \mbox{ } 0\leq \chi \leq 1, \mbox{ } \chi(r)=1 \mbox{ for } r\in \left[ \frac{\overline{\varrho}}{2},2\overline{\varrho} \right].
	\end{equation*}
	Then we have
	\begin{equation} \label{density bound in L2}
		\begin{aligned}
			\esssup_{t\in(0,T)} \|[\varrho_R-\overline{\varrho}]_{ess}(t,\cdot)\|_{L^2(\Omega)} &= \esssup_{t\in(0,T)} \int_{\Omega} (\varrho_R-\overline{\varrho})^2\chi(\varrho_R)(t,\cdot)  dx \\
			&\leq \frac{1}{c(\overline{\varrho})} \esssup_{t\in(0,T)} \| (P(\varrho_R)-P'(\overline{\varrho})(\varrho_R- \overline{\varrho})- P(\overline{\varrho}))(t,\cdot) \|_{L^1(\Omega)} \\
			&\leq c(E_0)
		\end{aligned}
	\end{equation}
	and
	\begin{equation} \label{density bound in Lgamma}
		\begin{aligned}
			\esssup_{t\in(0,T)} \|[\varrho_R-\overline{\varrho}]_{res}(t,\cdot)\|_{L^{\gamma}(\Omega)} &= \esssup_{t\in(0,T)} \int_{\Omega} |\varrho_R-\overline{\varrho}|^{\gamma}(1-\chi(\varrho_R))(t,\cdot)  dx \\
			&\lesssim \esssup_{t\in(0,T)} \int_{\Omega} (1+\varrho^{\gamma})(1-\chi(\varrho_R))(t,\cdot)  dx \\
			&\leq \frac{1}{c(\overline{\varrho})} \esssup_{t\in(0,T)} \| (P(\varrho_R)-P'(\overline{\varrho})(\varrho_R- \overline{\varrho})- P(\overline{\varrho}))(t,\cdot) \|_{L^1(\Omega)} \\
			&\leq c(E_0),
		\end{aligned}
	\end{equation}
	where $\lesssim$ means modulo a multiplication constant. In particular this implies that
	\begin{equation*}
		[\varrho_R-\overline{\varrho}]_{ess} \overset{*}{\rightharpoonup} f_{\varrho_R-\overline{\varrho}} \quad \mbox{in } L^{\infty}(0,T; L^2(\Omega)),
	\end{equation*}
	\begin{equation*}
		[\varrho_R-\overline{\varrho}]_{res} \overset{*}{\rightharpoonup} g_{\varrho_R-\overline{\varrho}} \quad \mbox{in } L^{\infty}(0,T; L^{\gamma}(\Omega));
	\end{equation*}
	passing to suitable subsequences as the case may be; defining $\varrho-\overline{\varrho}:=f_{\varrho_R-\overline{\varrho}}+g_{\varrho_R-\overline{\varrho}}$, we have that
	\begin{equation*}
		\varrho_R -\overline{\varrho} \overset{*}{\rightharpoonup} \varrho -\overline{\varrho} \quad \mbox{in }L^{\infty}(0,T; L^2+L^{\gamma}(\Omega)).
	\end{equation*}
	We can repeat the same procedure for the momenta; indeed, using \eqref{bound momentum sqrt density} we have
	\begin{equation} \label{momentum bound in L2}
		\begin{aligned}
			\esssup_{t\in(0,T)} \|[\varrho_R\textbf{u}_R]_{ess}(t,\cdot)\|_{L^2(\Omega)} &= \esssup_{t\in(0,T)} \int_{\Omega} \varrho_R \cdot \varrho_R|\textbf{u}_R|^2\chi(\varrho_R)(t,\cdot)  dx \\
			&\leq 2\overline{\varrho} \mbox{ } \esssup_{t\in(0,T)} \|\sqrt{\varrho_R} \textbf{u}_R(t,\cdot)\|_{L^2(\Omega)} \\
			&\leq c(E_0);
		\end{aligned}
	\end{equation}
	we also have
	\begin{align*}
		\esssup_{t\in(0,T)} \| [\sqrt{\varrho_R}]_{res}(t,\cdot)\|_{L^{2\gamma}(\Omega)} &= \esssup_{t\in(0,T)} \int_{\Omega} \varrho_R^{\gamma} (1-\chi(\varrho_R)) (t,\cdot) dx \\
		& \leq \esssup_{t\in(0,T)} \int_{\Omega} (\varrho_R^{\gamma}+1) (1-\chi(\varrho_R)) (t,\cdot) dx\\
		&\leq c(E_0),
	\end{align*}
	which, together with \eqref{bound momentum sqrt density} and H\"{o}lder's inequality with $p=\gamma+1$, gives
	\begin{equation} \label{momentum bound in Lq}
		\begin{aligned}
			\esssup_{t\in(0,T)} \|[\varrho_R\textbf{u}_R]_{res}(t,\cdot)\|_{L^{\frac{2\gamma}{\gamma+1}}(\Omega)} \leq \esssup_{t\in(0,T)}  \| [\sqrt{\varrho_R}]_{res}(t,\cdot)\|_{L^{2\gamma}(\Omega)} \|\sqrt{\varrho_R} \textbf{u}_R(t,\cdot)\|_{L^2(\Omega)} \leq c(E_0).
		\end{aligned}
	\end{equation}
	Then we obtain
	\begin{equation*}
		\varrho_R\textbf{u}_R \overset{*}{\rightharpoonup} \textbf{m} \quad \mbox{in } L^{\infty}(0,T; L^2+ L^{\frac{2\gamma}{\gamma+1}}(\Omega)),
	\end{equation*}
	passing to suitable subsequences as the case may be. In a similar way we have
	\begin{align*}
		\esssup_{t\in(0,T)} \|[p(\varrho_R)-p(\overline{\varrho})]_{ess}(t,\cdot)\|_{L^2(\Omega)} &= \esssup_{t\in(0,T)} \int_{\Omega} |p(\varrho_R)-p(\overline{\varrho})|^2 \chi(\varrho_R)(t,\cdot) dx \\
		&\leq p'(2\overline{\varrho}) \mbox{ }	\esssup_{t\in(0,T)} \|[\varrho_R-\overline{\varrho}]_{ess}(t,\cdot)\|_{L^2(\Omega)} \\
		&\leq c(E_0),
	\end{align*}
	and
	\begin{align*}
		\esssup_{t\in(0,T)} \|[p(\varrho_R)-p(\overline{\varrho})]_{res}(t,\cdot)\|_{L^1(\Omega)} &= A \esssup_{t\in(0,T)}\int_{\Omega} |\varrho_R^{\gamma}-\overline{\varrho}^{\gamma}| (1-\chi(\varrho_R)) (t,\cdot) dx \\
		& \leq A\max\{\overline{\varrho}^{\gamma},1\} \esssup_{t\in(0,T)} \int_{\Omega} (1+\varrho_R^{\gamma}) (1-\chi(\varrho_R)) (t,\cdot) dx \\
		&\leq c(E_0).
	\end{align*}
	Also, noticing that
	\begin{equation*}
		|\varrho_R \textbf{u}_R \otimes \textbf{u}_R| \lesssim \frac{1}{2} \varrho_R |\textbf{u}_R|^2,
	\end{equation*}
	from \eqref{bound momentum sqrt density} we deduce that also the convective terms are uniformly bounded in the non-reflexive space $L^1((0,T)\times \Omega)$, or better, in $L^{\infty}(0,T; L^1(\Omega))$. 
	
	There are two disturbing phenomena that may occur to bounded sequences in $L^1$: \textit{oscillations} and \textit{concentrations}. The idea is then to see $L^1((0,T)\times \Omega)$ as embedded in the space of bounded Radon measures $\mathcal{M}([0,T]\times \overline{\Omega})$ - that happens to be the dual to the separable space $C_0([0,T]\times \overline{\Omega})= \overline{C_c([0,T]\times \overline{\Omega}}^{\|\cdot\|_{\infty}}$ - through the identification
	\begin{equation*}
		\mu_f(\varphi)= \int_{0}^{T} \int_{\Omega} f\varphi dxdt, \quad \mbox{for all }\varphi \in C_0([0,T]\times \overline{\Omega}),
	\end{equation*}
	if $f\in L^1((0,T)\times \Omega)$.
	
	Accordingly, we may assume
	\begin{align*}
		\varrho_R -\overline{\varrho} \overset{*}{\rightharpoonup} \varrho -\overline{\varrho} & \quad \mbox{in }L^{\infty}(0,T; L^2+L^{\gamma}(\Omega));\\
		\varrho_R\textbf{u}_R \overset{*}{\rightharpoonup} \textbf{m} & \quad \mbox{in } L^{\infty}(0,T; L^2+ L^{\frac{2\gamma}{\gamma+1}}(\Omega)); \\
		\mu_{p(\varrho_R)-p(\overline{\varrho})} \overset{*}{\rightharpoonup} \mu_{\{p\}-p(\overline{\varrho})} & \quad \mbox{in }\mathcal{M}((0,T)\times \Omega); \\
		\mu_{\varrho_R \textbf{u}_R \otimes \textbf{u}_R} \overset{*}{\rightharpoonup} \mu_{\{ \mathbb{M} \}} & \quad \mbox{in }\mathcal{M}((0,T)\times \Omega; \mathbb{R}^N\times \mathbb{R}^N); \\
		\mu_{\frac{1}{2} \varrho_R |\textbf{u}|^2 + P(\varrho_R)-P'(\overline{\varrho})(\varrho_R-\overline{\varrho})-P(\overline{\varrho})} \overset{*}{\rightharpoonup} \mu_{\{E\}} & \quad \mbox{in }\mathcal{M}((0,T)\times \Omega),
	\end{align*}
	passing to suitable subsequences as the case may be. This means,
	\begin{equation*}
		\langle \mu_{p(\varrho_R)-p(\overline{\varrho})}; \varphi \rangle = \int_{0}^{T} \int_{\Omega} [p(\varrho_R)-p(\overline{\varrho})] \varphi dxdt \rightarrow \int_{0}^{T} \int_{\overleftarrow{\Omega}} [\{p\}-p(\overline{\varrho})] \varphi dxdt= \langle \mu_{\{p\}-p(\overline{\varrho})}; \varphi \rangle, \quad \mbox{as } R \rightarrow \infty, 
	\end{equation*}
	for every $\varphi \in C_c([0,T]\times \overline{\Omega})$; the same holds for the other convergences.
	
	We can now let $R\rightarrow \infty$ in \eqref{weak formulation continuity equation Navier-Stokes}, \eqref{weak formulation balance of momentum Navier-Stokes}; notice that the $R$-dependent viscous stress tensor vanishes. Indeed, using \eqref{bound on the terms of the viscous stress tensor} and H\"older's inequality we get
	\begin{equation*}
		\left| \frac{1}{R} \int_{0}^{T} \int_{\Omega} \mathbb{S}(\nabla_x \textbf{u}_R): \nabla_x \bm{\varphi} \ dxdt \ \right| \leq \frac{1}{\sqrt{R}} \left\| \frac{1}{\sqrt{R}}  \mathbb{S}(\nabla_x \textbf{u}_R) \right\|_{L^2((0,T) \times \Omega)}
		\left\| \nabla_x \bm{\varphi} \right\|_{L^2((0,T) \times \Omega)} \leq \frac{c(E_0)}{\sqrt{R}} \left\| \nabla_x \bm{\varphi} \right\|_{L^2((0,T) \times \Omega)}.
	\end{equation*}
	Then we get 
	\begin{equation*} 
		\int_{0}^{T} \int_{\Omega} [(\varrho-\overline{\varrho})\partial_t \varphi + \textbf{m}\cdot \nabla_x \varphi] dxdt=0,
	\end{equation*}
	for every $\varphi \in C_c^1((0,T)\times \Omega)$, and
	\begin{equation*} 
		\int_{0}^{T} \int_{\Omega} \left[ \textbf{m} \cdot \partial_t \bm{\varphi} +\{\mathbb{M}\} : \nabla_x \bm{\varphi} + (\{p\}-p(\overline{\varrho})) \mbox{div}_x \bm{\varphi} - a\textbf{m}\cdot \bm{\varphi}\right] dx dt=0, 
	\end{equation*}
	for every $\bm{\varphi}\in C_c^1((0,T)\times\Omega; \mathbb{R}^N)$. We can equivalently write
	\begin{equation*} 
		\int_{0}^{T} \int_{\Omega} [\varrho\partial_t \varphi + \textbf{m}\cdot \nabla_x \varphi] dxdt=0,
	\end{equation*}
	for every $\varphi \in C_c^1((0,T)\times \Omega)$, and
	\begin{equation*} 
		\int_{0}^{T} \int_{\Omega} \left[ \textbf{m} \cdot \partial_t \bm{\varphi} +\{\mathbb{M}\} : \nabla_x \bm{\varphi} + \{p\} \mbox{div}_x \bm{\varphi} - a\textbf{m}\cdot \bm{\varphi}\right] dx dt=0, 
	\end{equation*}
	for every $\bm{\varphi}\in C_c^1((0,T)\times\Omega; \mathbb{R}^N)$. As a matter of fact, the limit for $\varrho_R-\overline{\varrho}$ can be strengthened to
	\begin{equation*}
		\varrho_R-\overline{\varrho} \rightarrow \varrho-\overline{\varrho} \mbox{ in } C_{weak} ([0,T]; L^2+ L^{\gamma}(\Omega));
	\end{equation*}
	the same holds for the limit of $\varrho_R\textbf{u}_R$:
	\begin{equation*}
		\varrho_R\textbf{u}_R \rightarrow \textbf{m} \mbox{ in } C_{weak} ([0,T]; L^2+ L^{\frac{2\gamma}{\gamma+1}}(\Omega; \mathbb{R}^N)).
	\end{equation*}
	We can then rewrite the last two integral equations as
	\begin{equation} \label{eqcont}
		\int_{\Omega} \varrho \varphi (\tau, \cdot) dx - \int_{\Omega} \varrho \varphi(0,\cdot) dx = \int_{0}^{\tau} \int_{\Omega} [\varrho \partial_t \varphi+ \textbf{m}\cdot \nabla_x \varphi] dxdt,
	\end{equation}
	for any $\tau \in [0,T)$ and any $\varphi \in C_c^1([0,T]\times \overline{\Omega})$ and
	\begin{equation} \label{momentum equation with measures}
		\int_{\Omega} \textbf{m} \cdot \bm{\varphi}(\tau, \cdot) dx - \int_{\Omega} \textbf{m} \cdot \bm{\varphi}(0,\cdot) dx = \int_{0}^{\tau} \int_{\Omega} \left[ \textbf{m} \cdot \partial_t \bm{\varphi} +\{\mathbb{M}\} : \nabla_x \bm{\varphi} + \{p\} \mbox{div}_x \bm{\varphi} - a\textbf{m}\cdot \bm{\varphi}\right] dx dt,
	\end{equation}
	for any $\tau \in [0,T)$ and any $\bm{\varphi} \in C_c^1([0,T]\times \overline{\Omega};\mathbb{R}^N)$, $\bm{\varphi}\cdot \textbf{n}|_{\partial \Omega}=0$.
	
	Finally, using the generalization of the concept of Lebesgue point to Radon measures, we can deduce from the energy inequality \eqref{energy inequality}
	\begin{equation} \label{energy inequality with measure}
		\int_{\Omega}\{E\}(\tau, \cdot) dx + a \int_{0}^{\tau} \int_{\Omega} \trace \{\mathbb{M}\}dxdt \leq \int_{\Omega} \{E\}(0, \cdot) dx,
	\end{equation}
	for a.e. $\tau \in (0,T)$, where
	\begin{equation*}
		\int_{\Omega}\{E\}(\tau, \cdot) dx = \lim_{\delta \rightarrow 0} \frac{1}{2\delta} \int_{\tau - \delta}^{\tau + \delta} \int_{\Omega}\{E\} dxdt.
	\end{equation*}
	
	Equations \eqref{eqcont}, \eqref{momentum equation with measures}, and \eqref{energy inequality with measure}
	form a suitable platform for introducing the measure--valued solutions of the Euler system. To state the exact definition, 
	we make a short excursion in the theory of Young measures.
	
	\subsection{Young measures}
	
	We will introduce some useful notations.
	\begin{definition}
		Let $Q \subseteq \mathbb{R}^d$ be an open set. The mapping $\nu: Q \rightarrow \mathcal{M}(\mathbb{R}^m)$ is said to be \textit{weak-$*$ measurable}, if for all $F \in L^1(Q;C_0(\mathbb{R}^m))$ the function
		\begin{equation*}
			Q \ni x \mapsto \langle \nu_x, F(x,\cdot) \rangle =\int_{\mathbb{R}^m} F(x,\lambda) d\nu_x(\lambda),
		\end{equation*}
		is measurable; here and in the sequel we use the standard notation $\nu_x=\nu(x)$, as if measures $\nu_x$ were parametrized by $x$. In this case, since
		\begin{equation*}
			\|\nu_x\|_{\mathcal{M}(\mathbb{R}^m)}= \sup_{\substack{f \in C_0(\mathbb{R}^m) \\ \|f\|_{\infty}\leq 1}} |\langle \nu_x,f \rangle|,
		\end{equation*}
		the function $x \mapsto 			\|\nu_x\|_{\mathcal{M}(\mathbb{R}^m)}$ is also measurable and we can define
		\begin{equation*}
			\| \nu \|_{L^{\infty}_w(\Omega;\mathcal{M}(\mathbb{R}^m))} = \esssup_{x\in Q} \|\nu_x\|_{\mathcal{M}(\mathbb{R}^m)}.
		\end{equation*}
		Finally, let 
		\begin{equation*}
			L^{\infty}_{weak}(Q;\mathcal{M}(\mathbb{R}^m)) = \{ \nu: Q \rightarrow \mathcal{M}(\mathbb{R}^m); \mbox{ }\nu \mbox{ weak-$*$ measurable, }  	\| \nu \|_{L^{\infty}_w(\Omega;\mathcal{M}(\mathbb{R}^m))} < \infty\}.
		\end{equation*}
	\end{definition}

	Then, the following theorem holds.
	
	\begin{theorem} \label{dual of L1}
		Let $Q\subseteq \mathbb{R}^n$ be open. Let $\Phi \in (L^1(Q;C_0(\mathbb{R}^m)))^*$ be a linear bounded functional. Then there exists a unique $\nu \in L^{\infty}_{weak }(Q; \mathcal{M}(\mathbb{R}^m))$ such that, for all $F \in L^1(Q;C_0(\mathbb{R}^m))$,
		\begin{equation} \label{representation formula}
			\Phi(F)= \int_{Q} \langle \nu_x, F(x) \rangle dx,
		\end{equation}
		and
		\begin{equation*}
		\| \Phi\|_{(L^1(Q;C_0(\mathbb{R}^m)))^*} = \|\nu\|_{L^{\infty}_w(Q; \mathcal{M}(\mathbb{R}^m))}.
		\end{equation*}
	\end{theorem}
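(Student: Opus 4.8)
The plan is to recognize Theorem \ref{dual of L1} as the standard identification of the dual of a Bochner space $L^1(Q;X)$ with $L^\infty_{weak}(Q;X^*)$ in the special case where the Banach space $X=C_0(\mathbb{R}^m)$ is \emph{separable} and its dual is, by the Riesz representation theorem, $X^*=\mathcal{M}(\mathbb{R}^m)$. The proof splits into the (easy) observation that every $\nu$ induces a functional, the (harder) construction of $\nu$ from a given $\Phi$, and the verification of the representation formula, the norm identity, and uniqueness.

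First I would treat the easy direction: given $\nu\in L^\infty_{weak}(Q;\mathcal{M}(\mathbb{R}^m))$, the right-hand side of \eqref{representation formula} is a well-defined linear functional on $L^1(Q;C_0(\mathbb{R}^m))$ because weak-$*$ measurability of $\nu$ makes the integrand measurable, and from the pointwise bound $|\langle\nu_x,F(x)\rangle|\le\|\nu_x\|_{\mathcal{M}(\mathbb{R}^m)}\,\|F(x)\|_{C_0(\mathbb{R}^m)}\le\|\nu\|_{L^\infty_w}\,\|F(x)\|_{C_0(\mathbb{R}^m)}$ one gets, after integration over $Q$, that this functional is bounded with norm $\le\|\nu\|_{L^\infty_w}$. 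For the converse, fix $\Phi\in(L^1(Q;C_0(\mathbb{R}^m)))^*$. For each fixed $f\in C_0(\mathbb{R}^m)$ and each measurable $E\subseteq Q$ of finite Lebesgue measure, $\mathbbm{1}_E f\in L^1(Q;C_0(\mathbb{R}^m))$, and $\lambda_f(E):=\Phi(\mathbbm{1}_E f)$ defines a finite signed measure on $Q$ — countable additivity follows from dominated convergence in $L^1(Q;C_0(\mathbb{R}^m))$ together with continuity of $\Phi$ — which is absolutely continuous with respect to Lebesgue measure. By Radon--Nikodym there is $g_f\in L^\infty(Q)$ with $\Phi(\mathbbm{1}_E f)=\int_E g_f\,dx$ and $\|g_f\|_{L^\infty(Q)}\le\|\Phi\|\,\|f\|_{C_0(\mathbb{R}^m)}$, and $f\mapsto g_f$ is linear.

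Next I would pass to a genuine parametrized measure. Using separability of $C_0(\mathbb{R}^m)$, fix a countable $\mathbb{Q}$-linear dense subset $D=\{f_k\}_{k\in\mathbb{N}}$; after removing a Lebesgue-null set $Z\subseteq Q$ one may assume that for every $x\notin Z$ the map $f_k\mapsto g_{f_k}(x)$ is $\mathbb{Q}$-linear and satisfies $|g_{f_k}(x)|\le\|\Phi\|\,\|f_k\|_{C_0(\mathbb{R}^m)}$ for all $k$. For such $x$ this map extends uniquely to a bounded linear functional on $C_0(\mathbb{R}^m)$ of norm $\le\|\Phi\|$, i.e. by Riesz to a measure $\nu_x\in\mathcal{M}(\mathbb{R}^m)$; set $\nu_x=0$ for $x\in Z$. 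Then $\|\nu\|_{L^\infty_w}\le\|\Phi\|$. The representation formula \eqref{representation formula} holds by construction for $F=\mathbbm{1}_E f$ with $f\in D$, hence by linearity for $C_0(\mathbb{R}^m)$-valued simple functions taking values in $\mathrm{span}_{\mathbb{Q}}D$, and then for all $F\in L^1(Q;C_0(\mathbb{R}^m))$ by density, using the pointwise bound $|\langle\nu_x,F(x)\rangle|\le\|\Phi\|\,\|F(x)\|_{C_0(\mathbb{R}^m)}$ and the dominated convergence theorem; this simultaneously establishes measurability of $x\mapsto\langle\nu_x,F(x)\rangle$, i.e. weak-$*$ measurability of $\nu$. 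Combining with the easy direction gives $\|\Phi\|=\|\nu\|_{L^\infty_w}$. For uniqueness, if $\nu$ and $\tilde\nu$ both represent $\Phi$, testing against $\mathbbm{1}_E f_k$ forces $\langle\nu_x,f_k\rangle=\langle\tilde\nu_x,f_k\rangle$ for a.e. $x$ and each $k$; off the countable union of exceptional null sets, $\nu_x$ and $\tilde\nu_x$ agree on the dense set $D$, hence coincide as elements of $\mathcal{M}(\mathbb{R}^m)$.

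The main obstacle I expect is the measurability bookkeeping: moving from the countable dense family $D$, where everything is elementary, to an honestly weak-$*$ measurable map $x\mapsto\nu_x$ defined a.e., and then checking that \eqref{representation formula} propagates from simple functions to all of $L^1(Q;C_0(\mathbb{R}^m))$. A secondary technical point is the Radon--Nikodym step, which requires verifying that $\lambda_f$ is genuinely countably additive and of finite total variation, both of which rely on the continuity of $\Phi$ and on dominated convergence in the Bochner space $L^1(Q;C_0(\mathbb{R}^m))$.
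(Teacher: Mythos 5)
Your sketch is correct: it is the standard duality argument (scalarization via Radon--Nikodym for each fixed $f\in C_0(\mathbb{R}^m)$, a countable dense $\mathbb{Q}$-linear set to assemble $\nu_x$ off a single null set, Riesz representation, and a density/dominated-convergence step to propagate \eqref{representation formula} and obtain weak-$*$ measurability), and this is essentially the proof given in the reference the paper cites (M\'alek--Ne\v{c}as--Rokyta--R\r{u}\v{z}i\v{c}ka, Ch.~3, Thm.~2.11); the paper itself supplies no proof beyond that citation. The only point worth making explicit is that $Q$ may have infinite Lebesgue measure, so the Radon--Nikodym step should be carried out on a $\sigma$-finite exhaustion of $Q$ by sets of finite measure and the densities $g_f$ patched together, after which your estimate $|\lambda_f(E)|\le\|\Phi\|\,|E|\,\|f\|_{C_0(\mathbb{R}^m)}$ gives the stated $L^\infty$ bound.
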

	
	\begin{proof}
		See \cite{MalNecRuz}, Chapter 3, Theorem 2.11.
	\end{proof}

	From now on we will consider $Q= (0,T)\times \Omega$ and the sequence $\bm{z}^R=(\varrho_R-\overline{\varrho}, \textbf{m}_R=\varrho_R\textbf{u}_R)$ of solutions to the Navier--Stokes system; we can now construct the \textit{Young measure} associated to the sequence $\{ \bm{z}^R \}_{R>0}$. First, for every $R$ we define the mapping 
	\begin{equation*}
		\nu^R:Q \rightarrow \mathcal{M}(\mathbb{R}^4)
	\end{equation*}
	defined for a.e. $(t,x)\in Q$ by
	\begin{equation*}
		\nu_{t,x}^R =\delta_{\bm{z}^R(t,x)},
	\end{equation*}
	where $\delta_a$ is the Dirac measure supported at $a\in \mathbb{R}^4$. Hence, for every $\psi \in L^1(Q;C_0(\mathbb{R}^4))$ the function
	\begin{equation*}
		(t,x) \mapsto \langle \nu_{t,x}^R, \psi(t,x) \rangle
	\end{equation*}
	is measurable since it is integrable; indeed
	\begin{equation*}
		\langle \nu_{t,x}^R, \psi(t,x)  \rangle =\int_{\mathbb{R}^4} \psi(t,x,\cdot) d\nu_{t,x}^R =\int_{\mathbb{R}^4} \psi(t,x, \cdot) d\delta_{\bm{z}^R(t,x)} = \psi(t,x, \bm{z}^R(t,x)) ,
	\end{equation*}
	and then
	\begin{equation*}
		\int_{0}^{T} \int_{\Omega} |\langle \nu_{t,x}^R, \psi(t,x)  \rangle| dxdt \leq \int_{0}^{T} \int_{\Omega}  \sup_{\lambda \in \mathbb{R}^4} |\psi(t,x,\lambda)| dxdt= \|\psi\|_{L^1(Q;C_0(\mathbb{R}^4))}.
	\end{equation*}
	If we define
	\begin{equation*}
		\nu^R : (t,x)\mapsto \nu_{t,x}^R
	\end{equation*}
	it is weakly-$*$ measurable and we also have that 
	\begin{equation*}
		\|\nu^R \|_{L^{\infty}_w (Q; \mathcal{M}(\mathbb{R}^4))} = \esssup_{(t,x)\in Q} \|\nu_{t,x}^R\|_{\mathcal{M}(\mathbb{R}^4)}= \|\delta_{\bm{z}^R(t,x)}\|_{\mathcal{M}(\mathbb{R}^4)}=1.
	\end{equation*}
	Therefore, $\{\nu^R\}_{R>0}$ is uniformly bounded in $L_{weak}^{\infty}(Q; \mathcal{M}(\mathbb{R}^4))$, which by \hyperref[dual of L1]{Theorem \ref*{dual of L1}} is the dual space of the separable space $L^1(Q;C_0(\mathbb{R}^4))$; we can apply the Banach-Alaoglu theorem to find a subsequence, not relabeled, and $\nu\in L^{\infty}_{weak}(Q; \mathcal{M}(\mathbb{R}^4))$ such that
	\begin{equation*}
		\nu^{R} \overset{*}{\rightharpoonup} \nu \quad \mbox{ in }L^{\infty}_{weak}(Q; \mathcal{M}(\mathbb{R}^4)).
	\end{equation*} 
	This means that for all $\psi \in L^1(Q; C_0(\mathbb{R}^4))$
	\begin{equation*}
		\int_{0}^{T} \int_{\Omega} \psi(t,x, \bm{z}^R(t,x)) dxdt= \int_{0}^{T} \int_{\Omega} \langle \nu_{t,x}^R, \psi(t,x)  \rangle dxdt \rightarrow \int_{0}^{T} \int_{\Omega} \langle \nu_{t,x}, \psi(t,x)  \rangle dxdt \quad \mbox{as } R \rightarrow \infty.
	\end{equation*}
	If we now choose $\psi(t,x, \lambda)= g(t,x)\varphi(\lambda)$ with $g\in L^1(Q)$, $\varphi \in C_0(\mathbb{R}^4)$, the last limit tells us that 
	\begin{equation*}
		\int_{0}^{T} \int_{\Omega} g(t,x) \varphi(\bm{z}^R(t,x)) dxdt= \int_{0}^{T} \int_{\Omega} g(t,x)\langle \nu_{t,x}^R, \varphi  \rangle dxdt \rightarrow \int_{0}^{T} \int_{\Omega} g(t,x)\langle \nu_{t,x}, \varphi  \rangle dxdt \quad \mbox{as } R \rightarrow \infty.
	\end{equation*}
	Then, for every $\varphi \in C_0(\mathbb{R}^4)$, knowing that
	\begin{equation*}
		\varphi(\bm{z}^R) \overset{*}{\rightharpoonup} \overline{\varphi} \quad \mbox{in } L^{\infty}(Q),
	\end{equation*}
	we can deduce that 
	\begin{equation*}
		\overline{\varphi}(t,x) = \langle \nu_{t,x}, \varphi \rangle \quad \mbox{for a.e. } (t,x) \in Q.
	\end{equation*}
	From the weak-$*$ lower semi-continuity of the norm we also have that
	\begin{equation*}
		\|\nu_{t,x}\|_{\mathcal{M}(\mathbb{R}^4)} \leq \liminf_{R \rightarrow \infty} \|\nu_{t,x}^R\|_{\mathcal{M}(\mathbb{R}^4)} =1 \quad \mbox{for a.e. } (t,x)\in Q.
	\end{equation*}
	
	What we proved is the first statement in the following theorem.
	\begin{theorem}
		Let $Q \subset \mathbb{R}^d$ be a measurable set and let $\bm{z}^R:Q \rightarrow \mathbb{R}^s$, $R >0$, be a sequence of measurable functions. Then there exists a subsequence, still denoted by $\bm{z}^R$, and a measure-valued function $\nu$ with the following properties:
		\begin{enumerate}
			\item $\nu \in L^{\infty}_{weak}(Q; \mathcal{M}(\mathbb{R}^s))$, $\|\nu_y\|_{\mathcal{M}(\mathbb{R}^s)}\leq 1$, for a.e. $y\in Q$ and we have for every $\varphi \in C_0(\mathbb{R}^s)$, as $R \rightarrow \infty$,
			\begin{equation*}
				\varphi(\bm{z}^R) \overset{*}{\rightharpoonup} \overline{\varphi} \mbox{ in } L^{\infty}(Q), \quad \overline{\varphi}(y) = \langle \nu_y, \varphi \rangle, \mbox{ for a.e. } y\in Q;
			\end{equation*}
			\item moreover, if
			\begin{equation} \label{condition for a probability measure}
				\lim_{k\rightarrow \infty} \sup_{R>0} \meas \{y\in Q \cap B_r; |\bm{z}^R(y)| \geq k \} =0
			\end{equation}
			for every $r>0$, where $B_r\equiv \{ y\in Q; |y|\leq r \}$, then
			\begin{equation*}
				\|\nu_y\|_{\mathcal{M}(\mathbb{R}^s)}=1 \mbox{ for a.e. }y\in Q;
			\end{equation*}
			\item Let $\Psi: [0,\infty) \rightarrow \mathbb{R}$ be a Young function satisfying the $\Delta_2$-condition. If condition \eqref{condition for a probability measure} holds and if we have for some continuous function $\tau: \mathbb{R}^s \rightarrow \mathbb{R}$
			\begin{equation} \label{condition young meaure coincides with the limit}
				\sup_{R>0} \int_{Q} \Psi(|\tau(\bm{z}^R)|) dy < \infty,
			\end{equation}
			then 
			\begin{equation*}
			\tau(\bm{z}^R) \overset{*}{\rightharpoonup} \overline{\tau} \mbox{ in the Orlicz space } L_{\Psi}(Q) , \quad \overline{\tau}(y) = \langle \nu_y, \tau \rangle \mbox{ for a.e. }y\in Q.
			\end{equation*}
		\end{enumerate}
	\end{theorem}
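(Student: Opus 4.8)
I would establish the three assertions in order; the first, being essentially the computation already carried out above, I treat briefly.

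For assertion (1), the plan is to attach to the subsequence its elementary Young measures $\nu^R_y=\delta_{\bm{z}^R(y)}$, note that $\nu^R\in L^\infty_{weak}(Q;\mathcal{M}(\mathbb{R}^s))$ with $\|\nu^R\|_{L^\infty_w}=1$, and, since $L^{\infty}_{weak}(Q;\mathcal{M}(\mathbb{R}^s))$ is the dual of the separable space $L^1(Q;C_0(\mathbb{R}^s))$ (separability holds because the measurable set $Q\subseteq\mathbb{R}^d$ is $\sigma$-finite), apply the Banach--Alaoglu theorem to extract a weak-$*$ convergent subsequence with limit $\nu$. Testing the representation formula \eqref{representation formula} against $F(y,\lambda)=g(y)\varphi(\lambda)$ with $g\in L^1(Q)$ and $\varphi\in C_0(\mathbb{R}^s)$ identifies $\overline\varphi(y)=\langle\nu_y,\varphi\rangle$ for every $\varphi$; running $g\ge0$ and $\varphi\ge0$ over a countable dense family of nonnegative functions in $C_0(\mathbb{R}^s)$ shows $\nu_y\ge0$ a.e.; and choosing an increasing sequence $\varphi_k\in C_c(\mathbb{R}^s)$ with $0\le\varphi_k\le1$ and $\varphi_k(\lambda)=1$ for $|\lambda|\le k$ gives $\langle\nu_y,\varphi_k\rangle\le1$ a.e. (because $0\le\varphi_k(\bm{z}^R)\le1$), whence $\|\nu_y\|_{\mathcal{M}(\mathbb{R}^s)}=\langle\nu_y,1\rangle=\lim_k\langle\nu_y,\varphi_k\rangle\le1$ a.e. by monotone convergence.

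For assertion (2), fix $r>0$ and put $B_r=\{y\in Q:|y|\le r\}$, which has finite measure. With $\varphi_k$ as above one has $0\le1-\varphi_k(\bm{z}^R)\le\mathbf{1}_{\{|\bm{z}^R|\ge k\}}$, so, testing the convergence of (1) against $\mathbf{1}_{B_r}\in L^1(Q)$,
\begin{equation*}
	\int_{B_r}\big(1-\langle\nu_y,\varphi_k\rangle\big)\,dy=\lim_{R\to\infty}\int_{B_r}\big(1-\varphi_k(\bm{z}^R)\big)\,dy\ \le\ \sup_{R>0}\meas\{\,y\in B_r:\ |\bm{z}^R(y)|\ge k\,\}\ =:\ \varepsilon_k ,
\end{equation*}
and $\varepsilon_k\to0$ as $k\to\infty$ by hypothesis \eqref{condition for a probability measure}. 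Since $\langle\nu_y,\varphi_k\rangle\uparrow\|\nu_y\|_{\mathcal{M}(\mathbb{R}^s)}\le1$ monotonically, passing $k\to\infty$ forces $\int_{B_r}\big(1-\|\nu_y\|_{\mathcal{M}(\mathbb{R}^s)}\big)\,dy=0$, hence $\|\nu_y\|_{\mathcal{M}(\mathbb{R}^s)}=1$ a.e. on $B_r$; letting $r\to\infty$ proves (2). From here on $\nu_y$ is a probability measure for a.e. $y$.

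For assertion (3), I would first remark that the modular bound $\sup_R\int_Q\Psi(|\tau(\bm{z}^R)|)\,dy<\infty$ yields, by convexity of $\Psi$, a uniform bound on $\{\tau(\bm{z}^R)\}$ in the Orlicz space $L_\Psi(Q)=\big(E_{\Psi^*}(Q)\big)^*$; as $E_{\Psi^*}(Q)$ is separable, a subsequence converges weak-$*$ to some $\overline\tau\in L_\Psi(Q)$. Applying (1)--(2) to the truncations $\big(\Psi(|\tau|)\wedge m\big)\beta_k$, with $\beta_k\in C_c(\mathbb{R}^s)$, $0\le\beta_k\le1$, $\beta_k(\lambda)=1$ for $|\lambda|\le k$, and letting $k,m\to\infty$ by monotone convergence (there is no escape of mass, by \eqref{condition for a probability measure}), one gets $\int_{B_r}\langle\nu_y,\Psi(|\tau|)\rangle\,dy\le\liminf_R\int_Q\Psi(|\tau(\bm{z}^R)|)\,dy<\infty$ for every $r$; since $\Psi$ is superlinear, this makes $\overline\tau(y):=\langle\nu_y,\tau\rangle$ well defined with $\langle\nu_y,|\tau|\rangle\in L^1(B_r)$. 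To identify $\overline\tau$ with $\langle\nu_\cdot,\tau\rangle$, I would test against an arbitrary $g\in L^\infty(Q)$ supported in $B_r$ (so $g\in E_{\Psi^*}(Q)$) and split $\tau=\tau\beta_k+\tau(1-\beta_k)$: the truncated piece is controlled by (1), $\int g\,(\tau\beta_k)(\bm{z}^R)\,dy\to\int g\,\langle\nu_y,\tau\beta_k\rangle\,dy$ as $R\to\infty$, while for the tail
\begin{equation*}
	\sup_{R>0}\Big|\int g\,\tau(\bm{z}^R)\big(1-\beta_k(\bm{z}^R)\big)\,dy\Big|\ \le\ \|g\|_\infty\,\sup_{R>0}\int_{B_r\cap\{|\bm{z}^R|\ge k\}}|\tau(\bm{z}^R)|\,dy\ \longrightarrow\ 0\quad(k\to\infty),
\end{equation*}
by the de la Vallée--Poussin equi-integrability of $\{|\tau(\bm{z}^R)|\}$ (this is where the $\Delta_2$-condition together with the modular bound enter) combined with $\sup_R\meas\big(B_r\cap\{|\bm{z}^R|\ge k\}\big)\to0$ from \eqref{condition for a probability measure}. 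Interchanging the limits in $R$ and $k$, and using dominated convergence for $\langle\nu_y,\tau\beta_k\rangle\to\langle\nu_y,\tau\rangle$ (dominant $\langle\nu_y,|\tau|\rangle\in L^1(B_r)$), one obtains $\int g\,\overline\tau\,dy=\int g\,\langle\nu_y,\tau\rangle\,dy$ for all such $g$; hence $\overline\tau(y)=\langle\nu_y,\tau\rangle$ a.e. on $B_r$, and letting $r\to\infty$, and recalling that functions as above are total in $E_{\Psi^*}(Q)$, we conclude $\tau(\bm{z}^R)\overset{*}{\rightharpoonup}\overline\tau$ in $L_\Psi(Q)$ with $\overline\tau(y)=\langle\nu_y,\tau\rangle$.

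The step I expect to be the main obstacle is the uniform-in-$R$ tail estimate in (3), which is precisely what legitimizes the interchange of the limits $R\to\infty$ and $k\to\infty$: it forces the two hypotheses of (3) to be used in tandem — the Orlicz bound (through the $\Delta_2$-condition and de la Vallée--Poussin) to rule out concentration of $|\tau(\bm{z}^R)|$ on sets of small measure, and the tightness condition \eqref{condition for a probability measure} to ensure the sets $B_r\cap\{|\bm{z}^R|\ge k\}$ shrink uniformly in $R$. The remaining points — separability of $L^1(Q;C_0(\mathbb{R}^s))$ and $E_{\Psi^*}(Q)$ for a general measurable $Q\subseteq\mathbb{R}^d$, and the measurability and non-negativity of $y\mapsto\nu_y$ — are routine.
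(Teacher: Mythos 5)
Your proposal is correct, but it does substantially more than the paper does: the paper proves only assertion (1) in the text preceding the theorem (via exactly your route --- the elementary Young measures $\nu^R_y=\delta_{\bm{z}^R(y)}$, the duality $L^{\infty}_{weak}(Q;\mathcal{M}(\mathbb{R}^s))=(L^1(Q;C_0(\mathbb{R}^s)))^*$, and Banach--Alaoglu) and then cites M\'alek--Ne\v{c}as--Rokyta--R\r{u}\v{z}i\v{c}ka, Chapter 4, Theorem 2.1 for assertions (2) and (3). So for (1) you coincide with the paper; for (2) and (3) you supply self-contained arguments where the paper defers to the literature, and both arguments are sound: the truncation $1-\varphi_k(\bm{z}^R)\le\mathbf{1}_{\{|\bm{z}^R|\ge k\}}$ combined with the tightness hypothesis correctly forces $\|\nu_y\|=1$ a.e., and the splitting $\tau=\tau\beta_k+\tau(1-\beta_k)$ with the uniform-in-$R$ tail estimate (de la Vall\'ee--Poussin equi-integrability from the modular bound, plus tightness to shrink $B_r\cap\{|\bm{z}^R|\ge k\}$) correctly legitimizes the interchange of limits and identifies $\overline{\tau}=\langle\nu_\cdot,\tau\rangle$. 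What your approach buys is a proof readable without the reference; what it costs is some care with Orlicz-space functional analysis that the citation avoids.

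Two minor remarks. First, in (3) you extract a further subsequence of $\tau(\bm{z}^R)$ converging weak-$*$ in $L_{\Psi}(Q)=(E_{\Psi^*}(Q))^*$, whereas the theorem asserts convergence of the subsequence already fixed in (1); since your argument identifies the limit uniquely as $\langle\nu_\cdot,\tau\rangle$ independently of the subsubsequence, the standard subsequence principle upgrades this to convergence of the whole (sub)sequence --- worth one sentence. Second, your parenthetical locating the role of the $\Delta_2$-condition in the de la Vall\'ee--Poussin step is slightly off: equi-integrability needs only the superlinearity of the Young function $\Psi$ together with the modular bound, and the duality $L_{\Psi}=(E_{\Psi^*})^*$ holds for general complementary pairs; the $\Delta_2$-condition enters the statement essentially to make the Orlicz-space formulation of the conclusion (e.g.\ the equivalence of modular and norm boundedness in both directions, $E_\Psi=L_\Psi$) well behaved. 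Neither point affects the validity of the argument.
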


	\begin{proof}
		See \cite{MalNecRuz}, Chapter 4, Theorem 2.1.
	\end{proof}

	\begin{remark}
		If $\bm{z}^R$ are uniformly bounded in $L^p(Q;\mathbb{R}^s)$ for some $p\in[1,\infty)$, the condition \eqref{condition for a probability measure} is satisfied. Indeed, denoting $A^R_k \equiv \{y\in Q \cap B_r; |\bm{z}^R(y)| \geq k \}$, we have
		\begin{equation*}
			|A^R_k| k^p \leq \int_{A^R_k}|\bm{z}^R(y)|^p dy \leq \int_{Q} |\bm{z}^R(y)|^p dy \leq c.
		\end{equation*}
		Since $c$ is independent of both $R$ and $k$, we obtain
		\begin{equation*}
			\sup_{R >0} |A^R_k| \leq \frac{c}{k^p},
		\end{equation*}
		which implies \eqref{condition for a probability measure}.
	\end{remark}
	
	First, notice that condition \eqref{condition for a probability measure} is satisfied for  $\bm{z}^R=(\varrho_R-\overline{\varrho}, \textbf{m}_R)$; indeed, denoting again $A^R_k \equiv \{y\in Q \cap B_r; |\bm{z}^R(y)| \geq k \}$ we have, for $y\in A_k^R$
	\begin{align*}
		k\leq |(\varrho_R-\overline{\varrho}, \textbf{m}_R)(y)| &\leq |\varrho_R(y)|+ |\textbf{m}_R(y)| \\
		& \leq |[\varrho_R-\overline{\varrho}]_{ess}(y)| + |[\varrho_R-\overline{\varrho}]_{res}(y)|+ |[\textbf{m}_R]_{ess}(y)| + |[\textbf{m}_R]_{res}(y)|,
	\end{align*}
	and hence at least one of the terms on the last line must be $\geq \frac{k}{4}$ so that
	\begin{align*}
		A_k^R &\subseteq \underbrace{ \left\{y\in Q \cap B_r; |[\varrho_R-\overline{\varrho}]_{ess}(y)| \geq \frac{k}{4} \right\}}_{\equiv A_{k,1}^R} \cup \underbrace{ \left\{y\in Q \cap B_r; |[\varrho_R-\overline{\varrho}]_{res}(y)| \geq \frac{k}{4} \right\}}_{\equiv A_{k,2}^R} \\
		& \cup \underbrace{ \left\{y\in Q \cap B_r; |[\textbf{m}_R]_{ess}(y)| \geq \frac{k}{4} \right\}}_{\equiv A_{k,3}^R} \cup \underbrace{ \left\{y\in Q \cap B_r; |[\textbf{m}_R]_{res}(y)| \geq \frac{k}{4} \right\}}_{\equiv A_{k,4}^R}.
	\end{align*}
	For $k$ large enough $(k\geq 4)$, we have
	\begin{align*}
		|A_k^R| k &\leq 4 \sum_{i=1}^{4} |A_{k,i}^R| \frac{k}{4} \\
		& \lesssim |A_{k,1}^R| \left(\frac{k}{4}\right)^2 + |A_{k,2}^R| \left(\frac{k}{4}\right)^{\gamma} + |A_{k,3}^R| \left(\frac{k}{4}\right)^2 + |A_{k,4}^R| \left(\frac{k}{4}\right)^{\frac{2\gamma}{\gamma+1}} \\
		& \leq \int_{A^R_{k,1}} |[\varrho_R-\overline{\varrho}]_{ess}(y)|^2 dy + \int_{A^R_{k,2}} |[\varrho_R-\overline{\varrho}]_{res}(y)|^{\gamma} dy + \int_{A^R_{k,3}} |[\textbf{m}_R]_{ess}(y)|^2 dy + \int_{A^R_{k,4}} |[\textbf{m}_R]_{res}(y)|^{\frac{2\gamma}{\gamma+1}} dy \\
		& \leq \| [\varrho_R-\overline{\varrho}]_{ess} \|_{L^2(Q)} + \| [\varrho_R-\overline{\varrho}]_{res} \|_{L^{\gamma}(Q)} + \| [\textbf{m}_R]_{ess} \|_{L^2(Q)} +\| [\textbf{m}_R]_{res} \|_{L^{\frac{2\gamma}{\gamma+1}}(Q)} \\
		&\leq c(E_0),
	\end{align*}
	where in particular the constant $c(E_0)$ is independent of $k$ and $R$ so that
	\begin{equation*}
		\sup_{R >0} |A_k^R| k \leq \frac{c}{k},
	\end{equation*} 
	which implies \eqref{condition for a probability measure}. Then we obtain that the Young measure in our case is a parametrized family of probability measures supported on the set $[0,\infty)\times \mathbb{R}^N$, since the densities are supposed to be non-negative:
	\begin{equation*}
		\nu_{t,x}:(t,x)\in (0,T) \times \Omega \rightarrow \mathcal{P}([0,\infty)\times \mathbb{R}^N),
	\end{equation*}
	\begin{equation*}
		\nu \in L^{\infty}_{weak} ((0,T)\times \Omega; \mathcal{P}([0,\infty)\times \mathbb{R}^N)).
	\end{equation*} 
	It is also easy to check that $\Psi(t)=t^p$ with $p>1$ are Young functions that satisfy the $\Delta_2$-condition with the constant $2^p$, and in that case $L_{\Psi}(Q)=L^p(Q)$. Thus,
	\begin{enumerate}
		\item first, we can take $\Psi(t)=t^2$ and $\tau_1(\bm{z})=z_1\chi(z_1+\overline{\varrho})$, where $\bm{z}=(z_1,z_2,z_3,z_4)$ in our case, to notice that condition \eqref{condition young meaure coincides with the limit} is equivalent in requiring that $[\varrho_R-\overline{\varrho}]_{ess}$ are uniformly bounded in $L^2((0,T)\times \Omega)$ which is true from \eqref{density bound in L2}. 
		Then we obtain
		\begin{equation*}
			\langle \nu_{t,x}; \tau_1\rangle = f_{\varrho_R-\overline{\varrho}}(t,x) \mbox{ for a.e. } (t,x)\in (0,T)\times \Omega;
		\end{equation*}
		also, taking $\Psi(t)=t^{\gamma}$ and $\tau_2(\bm{z})=z_1(1-\chi(z_1+\overline{\varrho}))$, condition \eqref{condition young meaure coincides with the limit} is equivalent in requiring that $[\varrho_R-\overline{\varrho}]_{res}$ are uniformly bounded in $L^{\gamma}((0,T)\times \Omega)$ which is true from \eqref{density bound in Lgamma}. Then we obtain
		\begin{equation*}
			\langle \nu_{t,x}; \tau_2\rangle = g_{\varrho_R-\overline{\varrho}}(t,x) \mbox{ for a.e. } (t,x)\in (0,T)\times \Omega.
		\end{equation*}
		Unifying the two results we get 
		\begin{equation*}
			\langle \nu_{t,x}; \tau_1+\tau_2\rangle = (\varrho-\overline{\varrho})(t,x) \mbox{ for a.e. } (t,x)\in (0,T)\times \Omega.
		\end{equation*}
		We will write $\langle \nu_{t,x}; \varrho-\overline{\varrho} \rangle= (\varrho-\overline{\varrho})(t,x)$ for almost every $(t,x) \in (0,T)\times \Omega$ just to make the notation readable;
		\item secondly, we can take $\Psi(t)=t^2$ and $\tau_1(\bm{z})=z_i\chi(z_1+\overline{\varrho})$ with $i=2,3,4$ to see that condition \eqref{condition young meaure coincides with the limit} is equivalent in requiring that each component of $[\textbf{m}_R]_{ess}$ is uniformly bounded in $L^2((0,T)\times \Omega)$ which is true from \eqref{momentum bound in L2}. Also, choosing $\Psi(t)=t^{\frac{2\gamma}{\gamma+1}}$ and $\tau_2(\bm{z})=z_i(1-\chi(z_1+\overline{\varrho}))$ with $i=2,3,4$, condition \eqref{condition young meaure coincides with the limit} is equivalent in requiring that each component of $[\textbf{m}_R]_{res}$ is uniformly bounded in $L^{\frac{2\gamma}{\gamma+1}}((0,T)\times \Omega)$ which is true from \eqref{momentum bound in Lq}. Then we obtain
		\begin{equation*}
			\langle \nu_{t,x}; \tau_1+\tau_2\rangle = m_i(t,x) \mbox{ for a.e. } (t,x)\in (0,T)\times \Omega,
		\end{equation*}
		which we will write $\langle \nu_{t,x}; \textbf{m} \rangle = \textbf{m}(t,x)$ for almost every $(t,x) \in (0,T)\times \Omega$.
	\end{enumerate}

	\subsection{Concentration measures and dissipation defect}
	In the previous subsection we showed that the Young measure, applied to proper continuous functions, coincides almost everywhere with the density $\varrho-\overline{\varrho}$ and the momentum $\textbf{m}$. Now, we examine what happens for those functions $H$ for which we only know that 
	\begin{equation*}
		\| H(\varrho_R-\overline{\varrho}, \textbf{m}_R)\|_{L^1((0,T)\times \Omega)} \leq c \quad \mbox{uniformly in } R.
	\end{equation*}
	Without loss of generality, we can consider $|H|$ or, equivalently, assume that $H \geq 0$. We take a family of cut-off functions 
	\begin{equation*}
		T_k(z)= \min\{z,k\};
	\end{equation*}
	Then $T_k(H) \in C_0(\mathbb{R}^4)$ and from the previous construction we know that
	\begin{equation*}
		T_k(H(\varrho_R-\overline{\varrho},\textbf{m}_R)) \overset{*}{\rightharpoonup} \overline{T_k(H)} \quad \mbox{in } L^{\infty}((0,T)\times \Omega)
	\end{equation*}
	with
	\begin{equation*}
		\overline{T_k(H)}(t,x) = \langle \nu_{t,x}, T_k(H) \rangle \quad \mbox{for a.e. } (t,x) \in (0,T) \times \Omega.
	\end{equation*}
	On the other hand we have that 
	\begin{equation*}
		T_k(H)(\lambda) \nearrow H(\lambda), \mbox{ for any }\lambda \in \mathbb{R}^4 \mbox{ as } k\rightarrow \infty,
	\end{equation*}
	thus, by monotone convergence theorem, we have that
	\begin{equation*}
		\langle \nu_{t,x}, T_k(H) \rangle = \int_{\mathbb{R}^4} T_k(H)(\lambda) d\nu_{t,x}(\lambda) \rightarrow \int_{\mathbb{R}^4} H(\lambda) d\nu_{t,x}(\lambda) \quad \mbox{for a.e. } (t,x) \in (0,T) \mbox{ as } k\rightarrow \infty,
	\end{equation*}
	hence $H$ is $\nu_{t,x}$-integrable but the integral can also be infinite. However, by the weak-$*$ lower semi-continuity of the norm  
	\begin{equation*}
		\|\langle \nu_{(\cdot, \cdot)}, T_k(H) \|_{L^1((0,T)\times \Omega)} \leq \liminf_{R \rightarrow \infty} \|T_k(H(\varrho_R-\overline{\varrho}, \textbf{m}_R))\|_{L^1((0,T)\times \Omega)} \leq \liminf_{R\rightarrow \infty} \|H(\varrho_R-\overline{\varrho}, \textbf{m}_R)\|_{L^1((0,T)\times \Omega)} \leq c,
	\end{equation*}
	uniformly in $k$. Then, since
	\begin{itemize}
		\item[(i)] $\lim_{k\rightarrow \infty} \langle \nu_{t,x}; T_k(H) \rangle = \langle \nu_{t,x}; H \rangle$ for a.e. $(t,x)\in (0,T)\times \Omega$;
		\item[(ii)] $\sup_{k\in \mathbb{N}}\| \langle\nu_{(\cdot, \cdot)}; T_k(H)\rangle\|_{L^1((0,T)\times \Omega)} \leq c$,
	\end{itemize}
	applying Fatou's lemma we get that $\| \langle\nu_{(\cdot, \cdot)}; H\rangle\|_{L^1((0,T)\times \Omega)} \leq c$. Then $\langle \nu_{t,x}; H \rangle $ is finite for a.e. $(t,x)\in (0,T)\times \Omega$.
	
	In view of this we can introduce new measures
	\begin{align*}
		&\mu_{\mathbb{M}_{\infty}}= \mu_{\{ \mathbb{M} \}} -\left\langle \nu_{(\cdot, \cdot)}; \frac{\textbf{m}\otimes \textbf{m}}{\varrho} \right\rangle dxdt  ,\\
		&\mu_{p_{\infty}}= \mu_{\{ p \}-p(\overline{\varrho})} -\langle \nu_{(\cdot, \cdot)};p(\varrho)-p(\overline{\varrho})\rangle dxdt, \\
		&\mu_{\sigma_{\infty}} =\mu_{\trace \{\mathbb{M}\}} - \left\langle \nu_{(\cdot, \cdot)}; \frac{|\textbf{m}|^2}{\varrho}\right\rangle dxdt, \\  	&\mu_{E_{\infty}} = \mu_{\{E\}} - \left\langle \nu_{(\cdot, \cdot)}; \frac{1}{2} \frac{|\textbf{m}|^2}{\varrho} +P(\varrho)-P'(\overline{\varrho})(\varrho-\overline{\varrho})-P(\overline{\varrho}) \right \rangle dxdt.
	\end{align*}
	
	Now, revisiting the momentum equation \eqref{momentum equation with measures} and the fact that
	\begin{equation*}
		\divv_x \bm{\varphi} = \mathbb{I}: \nabla_x \bm{\varphi},
	\end{equation*}
	we get
	\begin{align*}
		\int_{\Omega} \textbf{m} \cdot \bm{\varphi}(\tau, \cdot) dx &- \int_{\Omega} \textbf{m}_0 \cdot \bm{\varphi}(0,\cdot) dx \\
		&= \int_{0}^{\tau} \int_{\Omega} \left[ \textbf{m} \cdot \partial_t \bm{\varphi} + \left[\left\langle \nu_{t,x}; \frac{\textbf{m}\otimes \textbf{m}}{\varrho} \right\rangle + \mathbb{M}_{\infty}\right]: \nabla_x \bm{\varphi} + [\langle \nu_{t,x};p(\varrho)\rangle + p_{\infty}] \mathbb{I}: \nabla_x \bm{\varphi} - a\textbf{m}\cdot \bm{\varphi}\right] dx dt,
	\end{align*}
	for all $\tau \in [0,T)$ and for all $\bm{\varphi} \in C^1_c([0,T]\times \overline{\Omega};\mathbb{R}^N)$, $\bm{\varphi}\cdot \textbf{n}|_{\partial \Omega}=0$, which can be rewritten as
	\begin{align*}
		\int_{\Omega} \textbf{m} \cdot \bm{\varphi}(\tau, \cdot) dx - \int_{\Omega} \textbf{m}_0 \cdot \bm{\varphi}(0,\cdot) dx &= \int_{0}^{\tau} \int_{\Omega} \left[ \textbf{m} \cdot \partial_t \bm{\varphi} + \left\langle \nu_{t,x}; \frac{\textbf{m}\otimes \textbf{m}}{\varrho} \right\rangle: \nabla_x \bm{\varphi} + \langle \nu_{t,x};p(\varrho)\rangle \divv_x \bm{\varphi} - a\textbf{m}\cdot \bm{\varphi}\right] dx dt \\
		&+ \int_{0}^{\tau} \int_{\Omega}  \nabla_x \bm{\varphi}: d\mu_m,
	\end{align*}
	for all $\tau \in [0,T)$ and for all $\bm{\varphi} \in C^1_c([0,T]\times \overline{\Omega};\mathbb{R}^N)$, $\bm{\varphi}\cdot \textbf{n}|_{\partial \Omega}=0$, where $ \mu_m=\mathbb{M}_{\infty}+ p_{\infty}\mathbb{I}\in \mathcal{M}([0,T]\times \overline{\Omega}; \mathbb{R}^N\times \mathbb{R}^N)$ is a tensor-valued measure.
	
	Similarly, from \eqref{energy inequality with measure} we get
	\begin{align*}
		\int_{\Omega} &\left[ \left\langle \nu_{\tau, x}; \frac{1}{2} \frac{|\textbf{m}|^2}{\varrho} +P(\varrho)-P'(\overline{\varrho})(\varrho-\overline{\varrho})-P(\overline{\varrho}) \right \rangle + E_{\infty}(\tau)\right] dx + a\int_{0}^{\tau} \int_{\Omega} \left[\left\langle \nu_{t,x}; \frac{|\textbf{m}|^2}{\varrho}\right\rangle + \sigma_{\infty} \right] dxdt \\
		&\quad \quad \quad \quad \quad \quad \quad \quad \quad \quad \quad \quad \leq \int_{\Omega} \left[ \left\langle \nu_{0, x}; \frac{1}{2} \frac{|\textbf{m}|^2}{\varrho} +P(\varrho)-P'(\overline{\varrho})(\varrho-\overline{\varrho})(\varrho-\overline{\varrho})-P(\overline{\varrho}) \right \rangle + E_{\infty}(0) \right]dx,
	\end{align*}
	for a.e. $\tau \in (0,T)$, which can be rewritten as
	\begin{align*}
		\int_{\Omega} \left\langle \nu_{\tau, x}; \frac{1}{2} \frac{|\textbf{m}|^2}{\varrho} +P(\varrho)-P'(\overline{\varrho})(\varrho-\overline{\varrho})-P(\overline{\varrho}) \right \rangle dx &+ a\int_{0}^{\tau} \int_{\Omega} \left\langle \nu_{t,x}; \frac{|\textbf{m}|^2}{\varrho}\right\rangle dxdt +\mathcal{D}(\tau) \\
		&\leq \int_{\Omega}  \left\langle \nu_{0, x}; \frac{1}{2} \frac{|\textbf{m}|^2}{\varrho} +P(\varrho)-P'(\overline{\varrho})(\varrho-\overline{\varrho})-P(\overline{\varrho}) \right \rangle  dx,
	\end{align*}
	for a.e. $\tau \in (0,T)$, with $\mathcal{D}\in L^{\infty}(0,T)$ such that
	\begin{equation*}
		\mathcal{D}(\tau) = \int_{\Omega} E_{\infty}(\tau) dx + a \int_{0}^{\tau} \int_{\Omega} \sigma_{\infty} dxdt
	\end{equation*}
	
	We also have that
	\begin{equation} \label{DDD}
		\int_{0}^{\tau} \int_{\Omega} d|\mu_m| \lesssim \int_{0}^{\tau} \mathcal{D}(t)dt,
	\end{equation}
	for a.e. $\tau \in (0,T)$. Indeed,
	\begin{align*}
		\int_{0}^{\tau} \int_{\Omega} d|\mu_m| &\leq \sum_{i,j=1}^{N}\int_{0}^{\tau} \int_{\Omega} |(\mathbb{M}_{\infty})_{i,j}| dxdt + \sum_{i,j=1}^{N}\int_{0}^{\tau} \int_{\Omega} |p_{\infty}| \delta_{i,j} dxdt \\
		&= \sum_{i,j=1}^{N}\int_{0}^{\tau} \int_{\Omega} |(\mathbb{M}_{\infty})_{i,j}| dxdt + N\int_{0}^{\tau} \int_{\Omega} |p_{\infty}|  dxdt.
	\end{align*}
	Now, we need the following
	\begin{lemma}
		Let $\{\bm{z}^R\}_{R > 0}$, $\bm{z}^R: Q \subset \mathbb{R}^d \rightarrow \mathbb{R}^m$ be a sequence generating a Young measure $\{\nu_y\}_{y\in Q}$, where $Q$ is a measurable set in $\mathbb{R}^d$. Let 
		\begin{equation*}
			G: \mathbb{R}^m \rightarrow [0,\infty)
		\end{equation*}
		be a continuous function such that
		\begin{equation*}
			\sup_{R >0} \|G(\bm{z}^R)\|_{L^1(Q)} < \infty,
		\end{equation*}
		and let $F$ be continuous such that
		\begin{equation*}
			F: \mathbb{R}^m \rightarrow \mathbb{R}, \quad |F(\bm{z})|\leq G(\bm{z}) \mbox{ for all } \bm{z}\in \mathbb{R}^m.
		\end{equation*}
		Denote
		\begin{equation*}
			\mu_{F_{\infty}} = \mu_{\tilde{F}}- \langle \nu_y, F(\textbf{v}) \rangle dy, \quad 
			\mu_{G_{\infty}} = \mu_{\tilde{G}}- \langle \nu_y, G(\textbf{v}) \rangle dy,
		\end{equation*}
		where $ \mu_{\tilde{F}}, \mu_{\tilde{G}} \in \mathcal{M}(Q)$ are the weak-$*$ limits of $\{F(\bm{z}^R)\}_{R > 0}$, $\{G(\bm{z}^R)\}_{R > 0}$ in $\mathcal{M}(Q)$. Then
		\begin{equation*}
			|F_{\infty}| \leq G_{\infty}.
		\end{equation*}
	\end{lemma}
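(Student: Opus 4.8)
The plan is a double truncation. The idea is to cut the \emph{range} of $F$ and $G$ at a finite level $k$, perform the limit $R\to\infty$ on the resulting bounded quantities (where it is harmless), and only afterwards let $k\to\infty$, exploiting that — as shown just before the statement for a generic nonnegative $H$ with $\sup_R\|H(\bm{z}^R)\|_{L^1(Q)}<\infty$ — the function $y\mapsto\langle\nu_y,G\rangle$ is finite a.e.\ and belongs to $L^1(Q)$. Concretely, for $k\in\mathbb N$ I would use the symmetric truncation $\beta_k(s)=\max\{-k,\min\{s,k\}\}$ and the one--sided truncation $T_k(s)=\min\{s,k\}$ already introduced above. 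Because $|F|\le G$ and $G\ge0$, two pointwise inequalities hold on $\mathbb R^m$:
\begin{equation*}
	|\beta_k(F)|=\min\{|F|,k\}\le\min\{G,k\}=T_k(G),\qquad |F-\beta_k(F)|=(|F|-k)^+\le(G-k)^+=G-T_k(G),
\end{equation*}
and moreover $\beta_k(F)\to F$, $T_k(G)\nearrow G$ pointwise with $|\beta_k(F)|\le G$.

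For fixed $k$, the functions $\beta_k(F)$, $T_k(G)$ are continuous and bounded, so $\beta_k(F(\bm{z}^R))$, $T_k(G(\bm{z}^R))$ are bounded in $L^\infty(Q)$; by the fundamental theorem on Young measures, applied exactly as in the construction above, they converge weak-$*$ in $L^\infty(Q)$, hence in $\mathcal M(Q)$, to $\langle\nu_y,\beta_k(F)\rangle$ and $\langle\nu_y,T_k(G)\rangle$. Since $\mu_{\tilde{F}}$, $\mu_{\tilde{G}}$ are by definition the weak-$*$ limits in $\mathcal M(Q)$ of $F(\bm{z}^R)$, $G(\bm{z}^R)$, subtracting gives
\begin{equation*}
	\mu_{F(\bm{z}^R)-\beta_k(F(\bm{z}^R))}\overset{*}{\rightharpoonup}\mu^{k}_{F_\infty}:=\mu_{\tilde{F}}-\langle\nu_y,\beta_k(F)\rangle\,dy,\qquad \mu_{G(\bm{z}^R)-T_k(G(\bm{z}^R))}\overset{*}{\rightharpoonup}\mu^{k}_{G_\infty}:=\mu_{\tilde{G}}-\langle\nu_y,T_k(G)\rangle\,dy
\end{equation*}
in $\mathcal M(Q)$, the second being a weak-$*$ limit of nonnegative measures, hence $\ge0$. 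The a.e.\ bound $|F(\bm{z}^R)-\beta_k(F(\bm{z}^R))|\le G(\bm{z}^R)-T_k(G(\bm{z}^R))$ survives the limit, via the elementary fact that if $\eta_R\overset{*}{\rightharpoonup}\eta$, $\zeta_R\overset{*}{\rightharpoonup}\zeta$ in $\mathcal M(Q)$ with $\zeta_R\ge0$ and $|\eta_R|\le\zeta_R$, then for every $0\le\phi\in C_c(Q)$ and every $\psi\in C_c(Q)$ with $|\psi|\le\phi$ one has $|\langle\eta,\psi\rangle|=\lim_R|\langle\eta_R,\psi\rangle|\le\lim_R\langle\zeta_R,\phi\rangle=\langle\zeta,\phi\rangle$, whence $|\eta|\le\zeta$. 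Applied here this yields $|\mu^{k}_{F_\infty}|\le\mu^{k}_{G_\infty}$ in $\mathcal M(Q)$ for every $k\in\mathbb N$.

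Then I let $k\to\infty$. Since $|\beta_k(F)|\le G$ and $\langle\nu_{(\cdot)},G\rangle\in L^1(Q)$ with $\langle\nu_y,G\rangle<\infty$ a.e.\ (the argument given for $H$ in the preceding subsection applies verbatim to $G$), dominated convergence yields $\langle\nu_y,\beta_k(F)\rangle\to\langle\nu_y,F\rangle$ in $L^1(Q)$; monotone convergence yields $\langle\nu_y,T_k(G)\rangle\nearrow\langle\nu_y,G\rangle$, again in $L^1(Q)$. Hence $\mu^{k}_{F_\infty}\to\mu_{F_\infty}$ and $\mu^{k}_{G_\infty}\to\mu_{G_\infty}$ in the total variation norm of $\mathcal M(Q)$, because only the absolutely continuous parts move and they move in $L^1(Q)$. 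Total variation convergence acts setwise, so it preserves $\le$ among positive measures and commutes with $|\cdot|$; passing to the limit in $|\mu^{k}_{F_\infty}|\le\mu^{k}_{G_\infty}$ gives $|\mu_{F_\infty}|\le\mu_{G_\infty}$, that is, $|F_\infty|\le G_\infty$.

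I expect the only genuinely delicate point — the reason this cannot be collapsed to one line — to be the weak-$*$ passage to the limit for the truncated quantities: $\beta_k(F)$ and $T_k(G)$ are merely bounded and continuous, not elements of $C_0(\mathbb R^m)$, so identifying their weak-$*$ limits with the Young--measure averages $\langle\nu_y,\cdot\rangle$ requires that no mass of $\bm{z}^R$ escapes to infinity, i.e.\ that the $\nu_y$ are genuine probability measures — precisely the tightness property secured before the statement through condition \eqref{condition for a probability measure}. Granting that, everything else is the bookkeeping of the two truncations together with the two standard facts used above (stability of measure inequalities under weak-$*$ limits, and the fact that $L^1$ convergence of densities is total variation convergence of the associated measures).
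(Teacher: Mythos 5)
Your argument is correct in the setting where the lemma is actually used, but it takes a genuinely different route from the paper's. You truncate in the \emph{range}: you cap $F$ and $G$ at height $k$ via $\beta_k$ and $T_k$, pass to the limit $R\to\infty$ for each fixed $k$, transfer the pointwise bound $|F-\beta_k(F)|\le G-T_k(G)$ to the limit measures, and then remove the truncation using dominated/monotone convergence of the densities $\langle\nu_y,\beta_k(F)\rangle$, $\langle\nu_y,T_k(G)\rangle$ in $L^1(Q)$. The paper instead truncates in the \emph{domain of the state variable}: it splits $\int_Q F(\bm{z}^R)\varphi\,dy$ according to $\{|\bm{z}^R|\le M\}$ versus $\{|\bm{z}^R|>M\}$, identifies the first piece in the iterated limit $\lim_{M}\lim_{R}$ with $\int_Q\langle\nu_y,F\rangle\varphi\,dy$, and reads off $\langle\mu_{F_\infty},\varphi\rangle=\lim_{M}\lim_{R}\int_{\{|\bm{z}^R|>M\}}F(\bm{z}^R)\varphi\,dy$, after which $|F|\le G$ gives the conclusion at once. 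The trade--off between the two is real. The paper's truncated test functions $F(\lambda)\varphi(y)\chi_{\{|\lambda|\le M\}}$ are compactly supported in $\lambda$ (after the sharp indicator, which is not continuous, is replaced by a continuous cut--off supported in a slightly larger ball --- a repair the paper itself glosses over), hence lie in $L^1(Q;C_0(\mathbb{R}^m))$, so only the defining duality of the Young measure is invoked and no tightness is needed. Your truncations $\beta_k(F)$, $T_k(G)$ are bounded and continuous but do not vanish at infinity, so identifying their weak--$*$ limits with $\langle\nu_y,\beta_k(F)\rangle$, $\langle\nu_y,T_k(G)\rangle$ genuinely requires that no mass of $\bm{z}^R$ escapes to infinity, i.e.\ condition \eqref{condition for a probability measure} --- a hypothesis that is not part of the lemma's statement. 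You flag this honestly, and it is verified for the sequence $(\varrho_R-\overline{\varrho},\textbf{m}_R)$ to which the lemma is applied, so nothing is lost in context; but strictly speaking your proof establishes the lemma only under that extra assumption, whereas the paper's domain truncation is precisely the device that renders the assumption unnecessary. The remaining ingredients of your argument --- nonnegativity of $\mu^{k}_{G_\infty}$ as a weak--$*$ limit of nonnegative functions, stability of the inequality $|\eta|\le\zeta$ under weak--$*$ convergence, and the passage from $L^1$ convergence of the regular parts to total--variation convergence of the measures --- are all sound.
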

	\begin{proof}
		We have seen that the Young measure $\{\nu_y\}_{y\in Q}$ is such that
		for all $\psi \in L^1(Q; C_0(\mathbb{R}^m))$ 
		\begin{equation*} 
			\int_{Q} \psi(y, \bm{z}^R(y)) dy \rightarrow  \int_{Q} \langle \nu_y, \psi(y)\rangle dy = \int_{Q} \int_{\mathbb{R}^m} \psi(y, \lambda) d\nu_y(\lambda) dy,
		\end{equation*}
		as $R \rightarrow \infty$. Now, from the fact that
		\begin{align*}
			& \mu_{F(\bm{z}^R)} \overset{\ast}{\rightharpoonup} \mu_{\tilde{F}} \quad \mbox{in }\mathcal{M}(Q), \\
			& \mu_{G(\bm{z}^R)} \overset{\ast}{\rightharpoonup} \mu_{\tilde{G}} \quad \mbox{in }\mathcal{M}(Q),
		\end{align*}
		we have that for all $\varphi \in C_0(Q)$
		\begin{align*}
			&\langle \mu_{\tilde{F}}, \varphi \rangle = \lim_{R \rightarrow \infty} \int_{Q} F(\bm{z}^R) \varphi dy = \lim_{R \rightarrow \infty} \int_{\{|\bm{z}^R|\leq M\}} F(\bm{z}^R) \varphi dy + \lim_{R \rightarrow \infty} \int_{\{|\bm{z}^R|> M\}} F(\bm{z}^R) \varphi dy, \\
			& \langle \mu_{\tilde{G}}, \varphi \rangle = \lim_{R \rightarrow \infty} \int_{Q} G(\bm{z}^R) \varphi dy = \lim_{R \rightarrow \infty} \int_{\{|\bm{z}^R|\leq M\}} G(\bm{z}^R) \varphi dy + \lim_{R \rightarrow \infty} \int_{\{|\bm{z}^R|> M\}} G(\bm{z}^R) \varphi dy.
		\end{align*}
		Now, we can write
		\begin{equation*}
			\int_{\{|\bm{z}^R|\leq M\}} F(\bm{z}^R(y)) \varphi(y) dy = \int_{Q} \psi(y,\bm{z}^R(y)) dy,
		\end{equation*}
		with
		\begin{equation*}
			\psi(y,\lambda)= F(\lambda) \varphi(y)\chi_{\{|\lambda|\leq M\}};
		\end{equation*}
		then, we have that $\psi \in L^1(Q; C_0(\mathbb{R}^m))$; indeed, calling $K=\supp (\varphi)$ we have
		\begin{equation*}
			\int_{Q} \|\psi(y, \cdot)\|_{C_0(\mathbb{R}^m)} dy = \int_{K} |\varphi(y)| \sup_{|\lambda|\leq M} |F(\lambda)| \leq |K|\sup_{y\in K} |\varphi(y)| \sup_{|\lambda|\leq M} |F(\lambda)| \leq c,
		\end{equation*}
		since both $\varphi$ and $F$ are continuous functions and so they admit maximum on compact sets. Then, for what we have told previously, we have 
		\begin{align*}
			\lim_{R \rightarrow \infty} \int_{\{|\bm{z}^R|\leq M\}} F(\bm{z}^R) \varphi dy = \lim_{R \rightarrow \infty} \int_{Q} \psi(y,\bm{z}^R(y)) dy = \int_{Q} \langle \nu_y, \psi(y)\rangle dy &= \int_{Q} \int_{\mathbb{R}^m} \psi(y, \lambda) d\nu_y(\lambda) dy \\
			&= \int_{Q} \int_{\{|\lambda|\leq M\}} F(\lambda)\varphi(y) d\nu_y(\lambda) dy.
		\end{align*}
		Applying now Lebesgue theorem we have
		\begin{equation*}
			\lim_{M\rightarrow \infty} \left(\lim_{R \rightarrow \infty} \int_{\{|\bm{z}^R|\leq M\}} F(\bm{z}^R) \varphi dy\right) = \int_{Q} \left(\int_{\mathbb{R}^m} F(\lambda) d\nu_y(\lambda) \right)\varphi dy =  \int_{Q} \langle \nu_y; F \rangle \varphi dy.
		\end{equation*}
		Similarly
		\begin{equation*}
			\lim_{M\rightarrow \infty} \left(\lim_{R \rightarrow \infty} \int_{\{|\bm{z}^R|\leq M\}} G(\bm{z}^R) \varphi dy\right) = \int_{Q} \langle \nu_y; G\rangle \varphi dy. 
		\end{equation*}
		Then, we deduce
		\begin{align*}
			&\langle \mu_{F_{\infty}}, \varphi \rangle = \lim_{M\rightarrow \infty} \left(\lim_{R \rightarrow \infty} \int_{\{|\bm{z}^R|> M\}} F(\bm{z}^R) \varphi dy\right), \\
			&\langle \mu_{G_{\infty}}, \varphi \rangle = \lim_{M\rightarrow \infty} \left(\lim_{R \rightarrow \infty} \int_{\{|\bm{z}^R|> M\}} G(\bm{z}^R) \varphi dy\right).
		\end{align*}
		Then, from condition $|F|\leq G$ we obtain what we wanted to prove.
	\end{proof}
	
	We can apply the lemma with
	\begin{itemize}
		\item $Q= (0,T) \times \Omega \subset \mathbb{R}^{N+1}$;
		\item $m=4$;
		\item $\bm{z}^R=(\varrho_R-\overline{\varrho},\textbf{m}_R)$,
	\end{itemize}
	and with $F=p(\varrho)-p(\overline{\varrho})$, $G=P(\varrho)-P'(\overline{\varrho})(\varrho-\overline{\varrho})-P(\overline{\varrho})$ first and $F=\frac{m_im_j}{\varrho}$, $G=\frac{|\textbf{m}|^2}{\varrho}$ then, to get
	\begin{equation*}
		\int_{0}^{\tau} \int_{\Omega} d|\mu_m| \lesssim \int_{0}^{\tau} \int_{\Omega} E_{\infty} dxdt \leq \int_{0}^{\tau} \mathcal{D}(t) dt.
	\end{equation*}
	
	\subsection{Dissipative measure-valued solution for the compressible Euler system with damping}
	Motivated by the previous discussion, we are ready to introduce the concept of \textit{dissipative measure--valued solution} to the compressible Euler system with damping. It can be seen is a generalization of a similar concept introduced by Gwiazda et al. \cite{GSWW}. While the definition in \cite{GSWW} is based on the description of concentrations via the Alibert--Bouchitt\'e defect measures \cite{AliBou}, our approach is motivated by \cite{FGSWW1}, where the mere inequality \eqref{DDD} is required postulating the domination of the concentrations by the energy dissipation defect. This strategy seems to fit better the studies of singular limits on general physical domains performed in the present paper.
	
	\begin{definition}
		A parametrized family of probability measures
		\begin{equation*}
		\nu_{t,x}:(t,x)\in (0,T) \times \Omega \rightarrow \mathcal{P}([0,\infty)\times \mathbb{R}^N),
		\end{equation*}
		\begin{equation*}
		\nu \in L^{\infty}_{weak} ((0,T)\times \Omega; \mathcal{P}([0,\infty)\times \mathbb{R}^N)),
		\end{equation*} 
		is a \textit{dissipative measure-valued solution} of the problem \eqref{continuity equation in function of the momentum}, \eqref{momentum equation in function of the momentum} with the initial condition $\{\nu_{0, x}\}_{x\in \Omega}$ if
		\begin{enumerate}
			\item the integral identity
			\begin{equation} \label{first condition measure-valued solution}
				\int_{\Omega} \langle \nu_{\tau,x}; \varrho \rangle \varphi dx -\int_{\Omega} \langle \nu_{0,x}; \varrho \rangle \varphi dx = \int_{0}^{\tau} \int_{\Omega} [\langle \nu_{t,x}; \varrho \rangle \partial_t \varphi + \langle \nu_{t,x}; \textbf{m}\rangle \cdot \nabla_x \varphi] dx dt + \int_{0}^{\tau}\int_{\Omega} \nabla_x \varphi \cdot d\mu_c
			\end{equation}
			holds for all $\tau \in[0,T)$, and for all $\varphi \in C^1_c([0,T]\times\overline{\Omega})$, where $\mu_c \in \mathcal{M}([0,T]\times \overline{\Omega}; \mathbb{R}^N)$ is a vector--valued measure;
			\item the integral identity
			\begin{equation} \label{second condition measure-valued solution}
			\begin{aligned}
				\int_{\Omega} \langle \nu_{\tau,x};\textbf{m}\rangle &\cdot \bm{\varphi}(\tau, \cdot) dx - \int_{\Omega} \langle \nu_{0,x};\textbf{m}\rangle \cdot \bm{\varphi}(0,\cdot) dx \\
				&= \int_{0}^{\tau} \int_{\Omega} \left[ \langle \nu_{t,x}; \textbf{m}\rangle \cdot \partial_t \bm{\varphi} + \left\langle \nu_{t,x}; \frac{\textbf{m}\otimes \textbf{m}}{\varrho} \right\rangle: \nabla_x \bm{\varphi} + \langle \nu_{t,x};p(\varrho)\rangle \divv_x \bm{\varphi} - a\langle \nu_{t,x}; \textbf{m}\rangle\cdot \bm{\varphi}\right] dx dt \\
				&+ \int_{0}^{\tau} \int_{\Omega}  \nabla_x \bm{\varphi}: d\mu_m,
			\end{aligned}
			\end{equation}
			holds for all $\tau \in [0,T)$ and for all $\bm{\varphi} \in C^1_c([0,T]\times \overline{\Omega};\mathbb{R}^N)$, $\bm{\varphi}\cdot \textbf{n}|_{\partial \Omega}$, where $\mu_m\in \mathcal{M}([0,T]\times \overline{\Omega}; \mathbb{R}^N\times \mathbb{R}^N)$ is a tensor--valued measure; both $\mu_c, \mu_m$ are called \textit{concentration measures};
			\item the following inequality
			\begin{align} \label{third condition measure-valued solution}
			\begin{aligned}
				\int_{\Omega} \left\langle \nu_{\tau, x}; \frac{1}{2} \frac{|\textbf{m}|^2}{\varrho} +P(\varrho)-P'(\overline{\varrho})(\varrho-\overline{\varrho})-P(\overline{\varrho}) \right \rangle dx &+ a\int_{0}^{\tau} \int_{\Omega} \left\langle \nu_{t,x}; \frac{|\textbf{m}|^2}{\varrho}\right\rangle dxdt +\mathcal{D}(\tau)\\
				&\leq \int_{\Omega}  \left\langle \nu_{0, x}; \frac{1}{2} \frac{|\textbf{m}|^2}{\varrho} +P(\varrho)-P'(\overline{\varrho})(\varrho-\overline{\varrho})-P(\overline{\varrho}) \right \rangle  dx,
			\end{aligned}			
			\end{align}
			holds for a.e. $\tau \in (0,T)$, where $\mathcal{D}\in L^{\infty}(0,T)$, $\mathcal{D}\geq 0$ is called \textit{dissipation defect} of the total energy;
			\item there exists a constant $C>0$ such that
			\begin{equation} \label{fourth condition measure-valued solutions}
				\int_{0}^{\tau} \int_{\Omega} d|\mu_c| + \int_{0}^{\tau} \int_{\Omega} d|\mu_m| \leq C \int_{0}^{\tau} \mathcal{D}(t) dt,
			\end{equation}	
			for a.e. $\tau \in (0,T)$.
		\end{enumerate}
	\end{definition}

	Now, summarizing the discussion concerning the vanishing viscosity limit of the Navier--Stokes system, we can state the first result of the present paper.
	
	\begin{theorem} \label{Existence}
		Let $\Omega \subset \mathbb{R}^N$, $N=2,3$ be a domain with compact Lipschitz boundary and $\overline{\varrho} \geq 0$ be a given far field density if $\Omega$ is unbounded. Suppose that $\gamma > \frac{N}{2}$ and let $\varrho^R$, $\textbf{u}^R$ be a family of weak solutions to the Navier--Stokes system \eqref{continuity equation N-S} -- \eqref{boundary condition N-S} in 
		\begin{equation*}
			(0,T) \times \Omega_R, \ \Omega_R = \Omega \cap B_R.
		\end{equation*}
		Let the corresponding initial data $\varrho_0$, $\textbf{u}_0$ be independent of $R$ satisfying
		\begin{equation*}
			\varrho_0 > 0 ,\ \int_{\Omega} \left[\frac{1}{2}\varrho_0|\textbf{u}_0|^2+ P(\varrho_0) - P'(\overline{\varrho}) (\varrho_0 - \overline{\varrho}) - P(\overline{\varrho})\right] dx \leq E_0.
		\end{equation*}
		
		Then the family $\{ \varrho^R , \textbf{m}^R = \varrho^R \textbf{u}^R \}_{R > 0}$ generates, as $R \to \infty$, a Young measure 
		$\{ \nu_{t,x} \}_{t \in (0,T); x \in \Omega}$ which is a dissipative measure--valued solution of the Euler system \eqref{continuity equation in function of the momentum}, \eqref{momentum equation in function of the momentum}.
	\end{theorem}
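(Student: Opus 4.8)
The plan is to verify that the parametrized measure $\{\nu_{t,x}\}$ constructed in the previous subsections---the weak-$*$ limit in $L^\infty_{weak}((0,T)\times\Omega;\mathcal{M}(\mathbb{R}^4))$ of the Dirac masses $\delta_{\bm{z}^R(t,x)}$ associated with $\bm{z}^R=(\varrho^R-\overline{\varrho},\textbf{m}^R)$---satisfies the four requirements in the definition of a dissipative measure--valued solution. Most of the analytic content has already been established in the weak sequential stability and Young measure subsections; what remains is to organize it and to identify the concentration measures and the dissipation defect.

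First I would use the uniform energy bound \eqref{bound on the initial energy} together with the algebraic inequality for $P(\varrho)-P'(\overline{\varrho})(\varrho-\overline{\varrho})-P(\overline{\varrho})$ to record the $R$-independent estimates \eqref{density bound in L2}--\eqref{momentum bound in Lq} on the essential and residual parts of $\varrho^R-\overline{\varrho}$ and $\textbf{m}^R$, together with the viscous bound \eqref{bound on the terms of the viscous stress tensor}. From these one obtains: (i) the tightness condition \eqref{condition for a probability measure}, so that $\nu_{t,x}$ is for a.e.\ $(t,x)$ a probability measure supported on $[0,\infty)\times\mathbb{R}^N$; (ii) via the Orlicz-space part of the Young measure theorem applied to the functions $\bm{z}\mapsto z_i\chi(z_1+\overline{\varrho})$ and $\bm{z}\mapsto z_i(1-\chi(z_1+\overline{\varrho}))$, the barycenter identifications $\langle\nu_{t,x};\varrho-\overline{\varrho}\rangle=\varrho-\overline{\varrho}$ and $\langle\nu_{t,x};\textbf{m}\rangle=\textbf{m}$ a.e.; and (iii) along a further subsequence, the weak-$*$ limits $\mu_{\{p\}-p(\overline{\varrho})}$, $\mu_{\{\mathbb{M}\}}$, $\mu_{\trace\{\mathbb{M}\}}$, $\mu_{\{E\}}$ in the spaces of bounded Radon measures, whence the concentration measures $\mathbb{M}_\infty$, $p_\infty$, $\sigma_\infty$, $E_\infty$ and the dissipation defect $\mathcal{D}(\tau)=\int_\Omega E_\infty(\tau)\,dx+a\int_0^\tau\int_\Omega\sigma_\infty\,dx\,dt$ are well defined.

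Next I would pass to the limit $R\to\infty$ in the weak formulations \eqref{weak formulation continuity equation Navier-Stokes}, \eqref{weak formulation balance of momentum Navier-Stokes} and in the energy inequality \eqref{energy inequality}. In the continuity equation the flux is exactly the component $\textbf{m}^R$ of $\bm{z}^R$, which converges weakly-$*$ in $L^\infty(0,T;L^2+L^{\frac{2\gamma}{\gamma+1}}(\Omega))$ to $\textbf{m}=\langle\nu;\textbf{m}\rangle$, so no concentration arises and \eqref{first condition measure-valued solution} holds with $\mu_c=0$; the $C_{weak}$-in-time refinement lets one evaluate the density at $\tau$ and at $0$, and since the data are independent of $R$ the initial measure is $\nu_{0,x}=\delta_{(\varrho_0-\overline{\varrho},(\varrho\textbf{u})_0)}$. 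In the momentum balance the viscous term is controlled by $R^{-1/2}\,\|R^{-1/2}\mathbb{S}(\nabla_x\textbf{u}^R)\|_{L^2}\,\|\nabla_x\bm{\varphi}\|_{L^2}\to0$; writing $\{\mathbb{M}\}=\langle\nu;\textbf{m}\otimes\textbf{m}/\varrho\rangle+\mathbb{M}_\infty$ and $\{p\}-p(\overline{\varrho})=\langle\nu;p(\varrho)-p(\overline{\varrho})\rangle+p_\infty$ and setting $\mu_m=\mathbb{M}_\infty+p_\infty\mathbb{I}$ yields \eqref{second condition measure-valued solution}. Passing to the limit in \eqref{energy inequality} at a generalized Lebesgue point of the energy measure, and substituting the same decompositions, gives \eqref{third condition measure-valued solution} with this $\mathcal{D}$.

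The hard part---the only step that is not routine bookkeeping---is condition \eqref{fourth condition measure-valued solutions}, the domination of the concentration measures by the dissipation defect. Here I would invoke the lemma on concentration measures twice: with $F$ running over the components $m_im_j/\varrho$ of the convective term and $G=|\textbf{m}|^2/\varrho$, and with $F=p(\varrho)-p(\overline{\varrho})$, $G=P(\varrho)-P'(\overline{\varrho})(\varrho-\overline{\varrho})-P(\overline{\varrho})$---both admissible, since in each case $|F|\lesssim G$ pointwise and $\sup_R\|G(\bm{z}^R)\|_{L^1}<\infty$ by the energy bound. This yields $|\mathbb{M}_\infty|\lesssim E_\infty$ and $|p_\infty|\lesssim E_\infty$ as measures, whence $\int_0^\tau\int_\Omega d|\mu_m|\lesssim\int_0^\tau\int_\Omega dE_\infty\le\int_0^\tau\mathcal{D}(t)\,dt$, which together with $\mu_c=0$ gives \eqref{fourth condition measure-valued solutions}. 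The same lemma with $F=G$ shows that $E_\infty\ge0$ and $\sigma_\infty\ge0$, hence $\mathcal{D}\ge0$, while $\mathcal{D}\in L^\infty(0,T)$ is inherited from the uniform energy estimate; and the vanishing of the energy concentration defect at $\tau=0$, again because the data do not depend on $R$, makes the right-hand sides of \eqref{second condition measure-valued solution} and \eqref{third condition measure-valued solution} reduce to the prescribed initial quantities. Collecting the four verified conditions completes the proof.
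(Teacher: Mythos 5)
Your proposal is correct and follows essentially the same route as the paper, which states Theorem \ref{Existence} as a summary of the preceding construction: uniform energy bounds, Young measure generation with barycenter identification, passage to the limit in the weak formulations and energy inequality, and domination of the concentration measures $\mu_m=\mathbb{M}_\infty+p_\infty\mathbb{I}$ (with $\mu_c=0$) by the dissipation defect via the comparison lemma. No substantive differences to report.
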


	\section{Weak-strong uniqueness}
	
	Our next goal is to show that the dissipative measure-valued solutions introduced in the previous section satisfy an extended version of the energy inequality \eqref{third condition measure-valued solution} known as \textit{relative energy inequality}.
	
	We introduce the \textit{relative energy functional}:
	\begin{equation*}
		\mathcal{E}(\nu=\nu_{t,x}(\varrho,\textbf{m})|r,\textbf{U})= \int_{\Omega} \left\langle \nu_{t,x}; \frac{1}{2\varrho}(|\textbf{m}-\varrho \textbf{U}|^2)+P(\varrho)- P'(r)(\varrho-r)-P(r) \right\rangle dx,
	\end{equation*}
	If $\varrho \mapsto p(\varrho)$ is strictly increasing in $(0,\infty)$, which is true in our case, then the pressure potential $P$ is strictly convex; indeed
	\begin{equation*}
		P''(\varrho)= \frac{p'(\varrho)}{\varrho} >0.
	\end{equation*}
	For a differentiable function this is equivalent in saying that the function lies above all of its tangents:
	\begin{equation*}
		P(\varrho) \geq P'(r)(\varrho -r) + P(r)
	\end{equation*}
	for all $\varrho, r \in (0,\infty)$, and the equality holds if and only if $\varrho=r$. Thus, we deduced that $\mathcal{E}\geq 0$, where equality holds if and only if
	\begin{equation*}
		\nu_{t,x}= \delta_{r(t,x), r(t,x)\textbf{U}(t,x)} \quad \mbox{for a.e. }(t,x) \in (0,T) \times \Omega.
	\end{equation*}
	
	We can now prove the following
	
	\begin{theorem}
		Let $[r,\textbf{U}]$ be a strong solution of the compressible Euler system with damping with compactly supported initial data so that $\textbf{U} \in C^{\infty}_c([0,T]\times \overline{\Omega}; \mathbb{R}^N)$, where in particular $\textbf{U}\cdot \textbf{n}|_{\partial \Omega}=0$, and $r-\overline{\varrho}\in C^{\infty}_c([0,T]\times \overline{\Omega})$ with $r>0$. Let $\{ \nu_{t,x} \}_{(t,x)\in (0,T)\times \Omega}$ be a dissipative measure-valued solution of the same system (in terms of $\varrho$ and the momentum $\textbf{m}$), with a dissipation defect $\mathcal{D}$ and such that 
		\begin{equation} \label{same initial data}
			\nu_{0,x}= \delta_{r(0,x),(r\textbf{U})(0,x)} \quad \mbox{for a.e. }x\in \Omega.
		\end{equation}
		Then $\mathcal{D}=0$ and 
		\begin{equation*}
			\nu_{t,x}= \delta_{r(t,x), (r\textbf{U})(t,x)} \quad \mbox{for a.e. }(t,x)\in (0,T)\times \Omega.
		\end{equation*}
	\end{theorem}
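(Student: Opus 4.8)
The plan is to establish a \emph{relative energy inequality} for the dissipative measure--valued solution against the strong solution $[r,\mathbf{U}]$, and then use a Gronwall argument together with the domination of the concentration measures by the dissipation defect \eqref{fourth condition measure-valued solutions} to conclude that $\mathcal{E}(\nu|r,\mathbf{U})$ stays zero for all times, which by the strict convexity of $P$ forces $\nu_{t,x}=\delta_{r,r\mathbf{U}}$ and $\mathcal{D}=0$.

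Concretely, I would first expand the functional $\mathcal{E}(\nu_{t,x}(\varrho,\mathbf{m})\,|\,r,\mathbf{U})$ by writing $\tfrac{1}{2\varrho}|\mathbf{m}-\varrho\mathbf{U}|^2 = \tfrac{1}{2}\tfrac{|\mathbf{m}|^2}{\varrho} - \mathbf{m}\cdot\mathbf{U} + \tfrac12\varrho|\mathbf{U}|^2$, so that $\mathcal{E}$ decomposes into the total energy term controlled by \eqref{third condition measure-valued solution}, plus terms linear in $\langle\nu;\varrho\rangle$ and $\langle\nu;\mathbf{m}\rangle$ which I can differentiate in time by inserting $\mathbf{U}$, $\tfrac12|\mathbf{U}|^2$, and $P'(r)$ as test functions into the continuity identity \eqref{first condition measure-valued solution} and the momentum identity \eqref{second condition measure-valued solution} (these are admissible since $\mathbf{U}\in C^\infty_c$ with $\mathbf{U}\cdot\mathbf{n}|_{\partial\Omega}=0$ and $r-\overline\varrho\in C^\infty_c$). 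This is legitimate because the strong solution is smooth and compactly supported, so its time derivatives and spatial derivatives are bounded. Using that $[r,\mathbf{U}]$ solves the Euler system with damping classically, most of the ``nice'' terms cancel and one is left, after the standard manipulations, with an inequality of the schematic form
\begin{equation*}
	\mathcal{E}(\nu|r,\mathbf{U})(\tau) + \mathcal{D}(\tau) + a\!\int_0^\tau\!\!\int_\Omega \left\langle\nu;\tfrac{|\mathbf{m}-\varrho\mathbf{U}|^2}{\varrho}\right\rangle \le \mathcal{E}(\nu|r,\mathbf{U})(0) + \int_0^\tau \!\!\int_\Omega \chi\,dxdt + \int_0^\tau \!\!\int_\Omega \nabla_x\mathbf{U} : d\mu_m + \int_0^\tau\!\!\int_\Omega (\ldots) d\mu_c,
\end{equation*}
where $\chi$ collects the ``remainder'' integrand that is pointwise bounded by $C\,\langle\nu;\,\tfrac{1}{2\varrho}|\mathbf{m}-\varrho\mathbf{U}|^2 + P(\varrho)-P'(r)(\varrho-r)-P(r)\rangle$ using the boundedness of $\nabla_x\mathbf{U}$, $\nabla_x r$, and the convexity estimates on $P$; the concentration terms are absorbed via $\|\nabla_x\mathbf{U}\|_\infty \int_0^\tau\!\!\int d|\mu_m| + C\int_0^\tau\!\!\int d|\mu_c| \le C'\int_0^\tau \mathcal{D}(t)\,dt$ from \eqref{fourth condition measure-valued solutions}.

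Since $\mathcal{E}(\nu|r,\mathbf{U})(0)=0$ by the initial-data hypothesis \eqref{same initial data} and $\mathcal{D}\ge 0$, and since the right-hand side integrands are dominated by $C''\big(\mathcal{E}(\nu|r,\mathbf{U})(t) + \int_0^t\mathcal{D}(s)\,ds\big)$, setting $h(\tau):=\mathcal{E}(\nu|r,\mathbf{U})(\tau) + \int_0^\tau\mathcal{D}(t)\,dt \ge 0$ yields $h(\tau)\le C''\int_0^\tau h(t)\,dt$ for a.e.\ $\tau$; Gronwall's lemma then gives $h\equiv 0$, hence both $\mathcal{E}(\nu|r,\mathbf{U})\equiv 0$ and $\mathcal{D}\equiv 0$. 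Finally, $\mathcal{E}(\nu|r,\mathbf{U})(\tau)=0$ together with the strict convexity of $P$ (so that $P(\varrho)-P'(r)(\varrho-r)-P(r)=0$ iff $\varrho=r$) and the fact that $\tfrac{1}{2\varrho}|\mathbf{m}-\varrho\mathbf{U}|^2=0$ iff $\mathbf{m}=\varrho\mathbf{U}$ forces $\nu_{t,x}=\delta_{r(t,x),(r\mathbf{U})(t,x)}$ for a.e.\ $(t,x)$, as desired.

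\textbf{The main obstacle} I expect is the bookkeeping in deriving the relative energy inequality: one must carefully justify using $\mathbf{U}$, $\tfrac12|\mathbf{U}|^2$, $P'(r)$ as test functions in the weak formulations (which requires the compact-support and smoothness assumptions on the strong solution, precisely why they are imposed), handle the Young-measure averages $\langle\nu;\cdot\rangle$ correctly — e.g.\ dealing with the set $\{\varrho=0\}$ in $\tfrac{\mathbf{m}\otimes\mathbf{m}}{\varrho}$ via the convention that it vanishes there — and confirm that every ``error'' term produced by the convective, pressure, and damping terms is genuinely controlled by the relative energy plus the dissipation defect, with no leftover term of the wrong sign or order. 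A secondary technical point is that the energy inequality \eqref{third condition measure-valued solution} holds only for a.e.\ $\tau$, so one works with a.e.\ $\tau$ throughout and invokes the $L^\infty$-bound on $\mathcal{D}$ to make the Gronwall step rigorous.
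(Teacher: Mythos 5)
Your proposal is correct and follows essentially the same route as the paper: the same relative energy functional, the same choice of test functions ($\textbf{U}$ in the momentum identity, $\tfrac12|\textbf{U}|^2$ and the pressure-potential derivative in the continuity identity — the paper uses $P'(r)-P'(\overline{\varrho})$ to ensure compact support, which is the one small adjustment you should make), the same absorption of the concentration measures via \eqref{fourth condition measure-valued solutions}, and the same Gronwall argument concluding with strict convexity of $P$. No substantive differences.
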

	
	\begin{remark}
		Note that we must have $\overline{\varrho} > 0$ if $\Omega$ is unbounded.
	\end{remark}
	\begin{proof}
		It is enough to prove that $\mathcal{E}(\tau)=0$ for all $\tau \in (0,T)$. We can take $\textbf{U}$ as a test function in the momentum equation \eqref{second condition measure-valued solution} to obtain
		\begin{align*}
			\left[\int_{\Omega} \langle \nu_{t,x}; \textbf{m} \rangle \cdot \textbf{U} dx \right]_{t=0}^{t=\tau} &= \int_{0}^{\tau} \int_{\Omega} \left[ \langle \nu_{t,x}; \textbf{m} \rangle \cdot \partial_t \textbf{U}+ \left\langle \nu_{t,x}; \frac{\textbf{m}\otimes \textbf{m}}{\varrho} \right\rangle: \nabla_x \textbf{U}+ \langle\nu_{t,x}; p(\varrho)\rangle \mbox{div}_x \textbf{U} \right] dx dt \\
			&-a\int_{0}^{\tau} \int_{\Omega}\langle \nu_{t,x};\textbf{m}\rangle \cdot \textbf{U} + \int_{0}^{\tau} \int_{\Omega} \nabla_x \textbf{U}: d\mu_m;
		\end{align*}
		and $\frac{1}{2} |\textbf{U}|^2$ as a test function in the continuity equation \eqref{first condition measure-valued solution} to get
		\begin{equation*}
			\left[\frac{1}{2}\int_{\Omega} \langle \nu_{t,x}; \varrho \rangle |\textbf{U}|^2 dx\right]_{t=0}^{t=\tau}= \int_{0}^{\tau} \int_{\Omega} [\langle \nu_{t,x}; \varrho \rangle \textbf{U}\cdot\partial_t \textbf{U} + \langle \nu_{t,x}; \textbf{m}\rangle \cdot \nabla_x \textbf{U} \cdot \textbf{U}] dx dt + \int_{0}^{\tau} \int_{\Omega} \textbf{U} \cdot \nabla_x \textbf{U}\cdot d\mu_c.
		\end{equation*}
		Finally, take $P'(r)-P'(\overline{\varrho})$ as test function in \eqref{first condition measure-valued solution} to get
		\begin{equation*}
			\left[ \int_{\Omega} \langle \nu_{t,x}; \varrho \rangle (P'(r)(t,\cdot)- P'(\overline{\varrho}))dx\right]_{t=0}^{t=\tau} = \int_{0}^{\tau} \int_{\Omega} [\langle \nu_{t,x}; \varrho \rangle \partial_t P'(r) + \langle \nu_{t,x}; \textbf{m}\rangle \cdot \nabla_x P'(r)] dx dt + \int_{0}^{\tau} \int_{\Omega} \nabla_x P'(r)\cdot d\mu_c.
		\end{equation*}
		Then, from the energy inequality \eqref{third condition measure-valued solution}, summing up all these terms we get
		\begin{equation*}
			\left[ \int_{\Omega} \left\langle \nu_{t,x}; \frac{1}{2\varrho} |\textbf{m}-\varrho \textbf{U}|^2+ P(\varrho)-\varrho P'(r)+\overline{\varrho}P(\overline{\varrho}) \right\rangle dx\right]_{t=0}^{t=\tau} +a\int_{0}^{\tau}\int_{\Omega} \left\langle \nu_{t,x}; \frac{\textbf{m}}{\varrho} \cdot (\textbf{m}-\varrho \textbf{U}) \right\rangle dxdt + \mathcal{D}(\tau)
		\end{equation*}
		\begin{align*}
			&\leq \int_{0}^{\tau} \int_{\Omega} \langle \nu_{t,x}; \varrho\textbf{U} -\textbf{m} \rangle \cdot[\partial_t \textbf{U} + \nabla_x \textbf{U} \cdot \textbf{U}] dxdt \\
			&+ \int_{0}^{\tau} \int_{\Omega} \left\langle \nu_{t,x}; \frac{(\textbf{m}-\varrho \textbf{U})\otimes (\varrho \textbf{U}-\textbf{m}) }{\varrho} \right\rangle: \nabla_x \textbf{U} dxdt \\
			& - \int_{0}^{\tau} \int_{\Omega} \langle \nu_{t,x}; p(\varrho)\rangle \divv_x \textbf{U} dxdt -\int_{0}^{\tau}\int_{\Omega} [\langle\nu_{t,x};\varrho\rangle \partial_tP'(r)+ \langle\nu_{t,x};\textbf{m}\rangle \cdot \nabla_x P'(r)] dxdt \\
			&- \int_{0}^{\tau} \int_{\Omega} \nabla_x \textbf{U}: d\mu_m + \int_{0}^{\tau} \int_{\Omega} \textbf{U}\cdot \nabla_x \textbf{U} \cdot d\mu_c -\int_{0}^{\tau} \int_{\Omega} \nabla_x P'(r)\cdot d\mu_c.
		\end{align*}
		Notice that the term 
		\begin{equation*}
			\frac{\textbf{m}}{\varrho} \cdot (\textbf{m}-\varrho \textbf{U}) = \frac{|\textbf{m}|^2}{\varrho}- \textbf{m}\cdot \textbf{U}
		\end{equation*}
		is well-defined and integrable. We have
		\begin{equation*}
			P(\varrho)-\varrho P'(r)+\overline{\varrho}P(\overline{\varrho}) = P(\varrho) -P'(r)(\varrho-r) -P(r) -[rP'(r)-P(r)-\overline{\varrho}P'(\overline{\varrho})],
		\end{equation*}
		where, since $P(\overline{\varrho})=0$,
		\begin{equation*}
			rP'(r)-P(r)-\overline{\varrho}P'(\overline{\varrho}) = p(r)-p(\overline{\varrho}).
		\end{equation*}
		Then
		\begin{equation*}
			\left[\int_{\Omega} \langle \nu_{t,x}; p(r)-p(\overline{\varrho}) \rangle dx\right]_{t=0}^{t=\tau} =\int_{0}^{\tau} \int_{\Omega} \langle \nu_{t,x};\partial_t(p(r)-p(\overline{\varrho})) \rangle dxdt= \int_{0}^{\tau} \int_{\Omega} \langle \nu_{t,x};\partial_t p(r) \rangle dxdt
		\end{equation*}
		and knowing that the pressure potential satisfy the equation 
		\begin{equation} \label{relation for the pressure potential}
			\partial_t P(r) + \divv_x(P(r)\textbf{U}) +p(r)\divv_x\textbf{U}=0,
		\end{equation}
		we can deduce that 
		\begin{equation} \label{relation}
			\int_{\Omega} \langle \nu_{t,x};\partial_t p(r) \rangle dx= \int_{\Omega} \langle \nu_{t,x}; r\partial_t P'(r) + r\nabla_x P'(r)\cdot \textbf{U} + p(r)\divv_x \textbf{U} \rangle dx.
		\end{equation}
		We obtain the \textit{relative energy inequality}:
		\begin{equation} \label{relative nergy inequality}
			\begin{aligned}
				\left[ \mathcal{E}(\nu|r,\textbf{U}) \right]_{t=0}^{t=\tau} &+a\int_{0}^{\tau}\int_{\Omega} \left\langle \nu_{t,x}; \frac{\textbf{m}}{\varrho} \cdot (\textbf{m}-\varrho \textbf{U}) \right\rangle dxdt + \mathcal{D}(\tau) \\
				&\leq \int_{0}^{\tau} \int_{\Omega} \langle \nu_{t,x}; \varrho\textbf{U} -\textbf{m} \rangle \cdot[\partial_t \textbf{U} + \nabla_x \textbf{U} \cdot \textbf{U}] dxdt \\
				&+ \int_{0}^{\tau} \int_{\Omega} \left\langle \nu_{t,x}; 	\frac{(\textbf{m}-\varrho \textbf{U})\otimes (\varrho \textbf{U}-\textbf{m}) }{\varrho} \right\rangle: \nabla_x \textbf{U} dxdt \\
				& - \int_{0}^{\tau} \int_{\Omega} \langle \nu_{t,x}; p(\varrho)-p(r)\rangle \divv_x \textbf{U} dxdt \\
				& -\int_{0}^{\tau}\int_{\Omega} [\langle\nu_{t,x};(\varrho -r)\partial_tP'(r)+ (\textbf{m}-r\textbf{U}) \cdot  \nabla_x P'(r)\rangle dxdt \\
				&- \int_{0}^{\tau} \int_{\Omega} \nabla_x \textbf{U}: 	d\mu_m + \int_{0}^{\tau} \int_{\Omega} \textbf{U}\cdot \nabla_x \textbf{U} \cdot d\mu_c -\int_{0}^{\tau} \int_{\Omega} \nabla_x P'(r)\cdot d\mu_c.
			\end{aligned}
		\end{equation}  
		Now we can use the fact that $[r,\textbf{U}]$ is a strong solution: from the momentum equation we can deduce that
		\begin{equation*}
			\partial_t \textbf{U} + \textbf{U}\cdot \nabla_x \textbf{U} = -\frac{1}{r} \nabla_x p(r) - a \textbf{U}= -P''(r) \nabla_x r -a\textbf{U}=- \nabla_xP'(r) -a\textbf{U};
		\end{equation*}
		substituting, we get
		\begin{align*}
			\left[ \mathcal{E}(\nu|r,\textbf{U}) \right]_{t=0}^{t=\tau} &+a\int_{0}^{\tau}\int_{\Omega} \left\langle \nu_{t,x}; \frac{1}{2\varrho} |\textbf{m}-\varrho\textbf{U}|^2 \right\rangle dxdt + \mathcal{D}(\tau) \\
			&\leq \int_{0}^{\tau} \int_{\Omega} \left\langle \nu_{t,x}; \frac{(\textbf{m}-\varrho \textbf{U})\otimes (\varrho \textbf{U}-\textbf{m}) }{\varrho} \right\rangle: \nabla_x \textbf{U} dxdt \\
			& - \int_{0}^{\tau} \int_{\Omega} \langle \nu_{t,x}; p(\varrho)-p(r)\rangle \divv_x \textbf{U} dxdt \\
			& -\int_{0}^{\tau}\int_{\Omega} [\langle\nu_{t,x};P''(r)(\varrho-r)[\partial_tr+\nabla_x r \cdot \textbf{U} ]\rangle dxdt \\
			&- \int_{0}^{\tau} \int_{\Omega} \nabla_x \textbf{U}: d\mu_m + \int_{0}^{\tau} \int_{\Omega} \textbf{U}\cdot \nabla_x \textbf{U} \cdot d\mu_c -\int_{0}^{\tau} \int_{\Omega} \nabla_x P'(r)\cdot d\mu_c.
		\end{align*}
		From the continuity equation we also have
		\begin{equation*}
			\partial_t r+\nabla_x r \cdot \textbf{U}= -r \divv_x \textbf{U},
		\end{equation*}
		and thus, knowing that $rP''(r)=p'(r)$, we get
		\begin{align*}
			\left[ \mathcal{E}(\nu|r,\textbf{U}) \right]_{t=0}^{t=\tau} &+a\int_{0}^{\tau}\int_{\Omega} \left\langle \nu_{t,x}; \frac{1}{2\varrho} |\textbf{m}-\varrho\textbf{U}|^2 \right\rangle dxdt + \mathcal{D}(\tau) \\
			&\leq \int_{0}^{\tau} \int_{\Omega} \left\langle \nu_{t,x}; \frac{(\textbf{m}-\varrho \textbf{U})\otimes (\varrho \textbf{U}-\textbf{m}) }{\varrho} \right\rangle: \nabla_x \textbf{U} dxdt \\
			& - \int_{0}^{\tau} \int_{\Omega} \langle \nu_{t,x}; 	p(\varrho)-p'(r)(\varrho-r)-p(r)\rangle \divv_x \textbf{U} dxdt \\
			&- \int_{0}^{\tau} \int_{\Omega} \nabla_x \textbf{U}: d\mu_m + \int_{0}^{\tau} \int_{\Omega} \textbf{U}\cdot \nabla_x \textbf{U} \cdot d\mu_c -\int_{0}^{\tau} \int_{\Omega} \nabla_x P'(r)\cdot d\mu_c.
		\end{align*}
		Finally, using the fact that the initial data are the same and thus $\mathcal{E}(\nu|r,\textbf{U})(0)=0$, we end up to
		\begin{align*}
			\mathcal{E}(\nu|r,\textbf{U})(\tau) &+a\int_{0}^{\tau}\int_{\Omega} \left\langle \nu_{t,x}; \frac{1}{2\varrho} |\textbf{m}-\varrho\textbf{U}|^2 \right\rangle dxdt + \mathcal{D}(\tau) \\
			&\leq \int_{0}^{\tau} \int_{\Omega} \left\langle \nu_{t,x}; \left|\frac{(\textbf{m}-\varrho \textbf{U})\otimes (\varrho \textbf{U}-\textbf{m}) }{\varrho}\right| \right\rangle |\nabla_x \textbf{U}| dxdt \\
			& + \int_{0}^{\tau} \int_{\Omega} \langle \nu_{t,x}; |p(\varrho)-p'(r)(\varrho-r)-p(r)|\rangle |\divv_x \textbf{U}| dxdt \\
			&+ \int_{0}^{\tau} \int_{\Omega} |\nabla_x \textbf{U}|\cdot d|\mu_m| + \int_{0}^{\tau} \int_{\Omega} |\textbf{U}\cdot \nabla_x \textbf{U}| \cdot d|\mu_c| + \int_{0}^{\tau} \int_{\Omega} |\nabla_x P'(r)|\cdot d|\mu_c|.
		\end{align*}
		Since $\textbf{U}$ and $P'(r)-P(\overline{\varrho})$ have compact support we can control the terms $|\nabla_x \textbf{U}|$, $|\divv_x \textbf{U}|$, $|\textbf{U}\cdot \nabla_x \textbf{U}|$ and $|\nabla_x P'(r)|$ by some constants. It is also obvious that there exist a constant $c_1$ such that
		\begin{equation*}
			\left| \frac{(\textbf{m}- \varrho \textbf{U}) \otimes (\varrho \textbf{U}-\textbf{m})}{\varrho} \right| \leq \frac{c_1}{2\varrho} |\textbf{m}-\varrho \textbf{U}|^2,
		\end{equation*}
		and a constant $c_2$ such that
		\begin{equation*}
			|p(\varrho) -p'(r)(\varrho - r)- p(r)| \leq c_2 (P(\varrho) -P'(r)(\varrho - r)- P(r)).
		\end{equation*}
		Thus
		\begin{equation*}
			\mathcal{E}(\varrho,\textbf{m}|r,\textbf{U})(\tau) + \mathcal{D}(\tau) \leq c \int_{0}^{\tau} \left[\mathcal{E}(\varrho,\textbf{m}|r,\textbf{U})(t)+\mathcal{D}(t)\right] dt.
		\end{equation*}
		By Gronwall lemma we obtain
		\begin{equation*}
			\mathcal{E}(\varrho,\textbf{m}|r,\textbf{U})(\tau) + \mathcal{D}(\tau)\leq 0 \quad \mbox{for all }\tau \in (0,T).
		\end{equation*}
		But  since $\mathcal{E},\mathcal{D} \geq 0$ this implies $\mathcal{D}(\tau)=0$ and $\mathcal{E}(\tau)=0$ for all $\tau \in (0,T)$.
	\end{proof}

	Notice that the relative energy inequality \eqref{relative nergy inequality} is true for general functions $r-\overline{\varrho} \in C_c^{\infty}([0,T]\times \overline{\Omega})$, $\textbf{U} \in C_c^{\infty}([0,T]\times \overline{\Omega}; \mathbb{R}^N)$, not necessarily strong solutions to the Euler system. Then, using a density argument, we can prove the following result.
	
	\begin{theorem} \label{theorem weak-strong uniqueness}
		Let $[r,\textbf{U}]$ be a strong solution of the compressible Euler system with damping such that $\textbf{U} \in C([0,T]; H^M(\Omega; \mathbb{R}^N))$, $M>\frac{N}{2}+1$, where in particular $\textbf{U}\cdot \textbf{n}|_{\partial \Omega}=0$, and $r-\overline{\varrho}\in C([0,T]; H^M(\Omega))$ with $r>0$. Let $\{ \nu_{t,x} \}_{(t,x)\in (0,T)\times \Omega}$ be a dissipative measure-valued solution of the same system (in terms of $\varrho$ and the momentum $\textbf{m}$), with a dissipation defect $\mathcal{D}$ and such that 
		\begin{equation*} 
			\nu_{0,x}= \delta_{r(0,x),(r\textbf{U})(0,x)} \quad \mbox{for a.e. }x\in \Omega.
		\end{equation*}
		Then $\mathcal{D}=0$ and 
		\begin{equation*}
			\nu_{t,x}= \delta_{r(t,x), (r\textbf{U})(t,x)} \quad \mbox{for a.e. }(t,x)\in (0,T)\times \Omega.
		\end{equation*}
	\end{theorem}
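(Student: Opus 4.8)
The strategy is to obtain the statement from the preceding theorem by a density argument, the key point being that, as already observed above, the relative energy inequality \eqref{relative nergy inequality} is available for \emph{every} pair of test functions $r-\overline{\varrho}\in C^\infty_c([0,T]\times\overline{\Omega})$, $\textbf{U}\in C^\infty_c([0,T]\times\overline{\Omega};\mathbb{R}^N)$ with $\textbf{U}\cdot\textbf{n}|_{\partial\Omega}=0$. First I would record the regularity actually available for the given strong solution: since $M>\frac{N}{2}+1$ we have the Sobolev embedding $H^{M-1}(\Omega)\hookrightarrow C^0(\overline{\Omega})$, so $r-\overline{\varrho}$ and $\textbf{U}$ are of class $C^1$ in $x$; moreover, writing the Euler system with damping as $\partial_t\textbf{U}=-\textbf{U}\cdot\nabla_x\textbf{U}-\nabla_x P'(r)-a\textbf{U}$ and $\partial_t r=-\divv_x(r\textbf{U})$ and using that $H^M(\Omega)$ with $M>\frac{N}{2}$ is a multiplicative algebra, one sees that $\partial_t\textbf{U}$ and $\partial_t r$ are continuous on $[0,T]\times\overline{\Omega}$; finally, $r$ takes values in a fixed compact subset of $(0,\infty)$ (here one uses $\overline{\varrho}>0$ in the unbounded case together with $r-\overline{\varrho}\in C_0(\overline{\Omega})$). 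Hence $[r,\textbf{U}]$ is in fact a classical solution whose first derivatives --- $\partial_t\textbf{U}$, $\nabla_x\textbf{U}$, $\nabla_x P'(r)=P''(r)\nabla_x r$ and $\partial_t P'(r)=P''(r)\partial_t r$ --- are bounded, continuous and decaying at infinity.

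Next I would approximate $[r,\textbf{U}]$ by a sequence $[r_n,\textbf{U}_n]\in C^\infty_c([0,T]\times\overline{\Omega})$ with $\textbf{U}_n\cdot\textbf{n}|_{\partial\Omega}=0$, with $r_n$ lying in a fixed compact subset of $(0,\infty)$, and with $r_n\to r$, $\textbf{U}_n\to\textbf{U}$ together with their first space and time derivatives uniformly on $[0,T]\times\overline{\Omega}$. (One can avoid the issue of preserving the tangency condition under mollification by instead first extending \eqref{relative nergy inequality} to $C^1$, not necessarily compactly supported, test pairs via a scalar spatial cut-off $\psi_R(x)=\psi(|x|/R)$ --- which keeps $\psi_R\textbf{U}\cdot\textbf{n}|_{\partial\Omega}=0$ --- combined with a mollification in time; the error terms then carry a factor $\nabla\psi_R=O(R^{-1})$ supported on an annulus and vanish as $R\to\infty$ since $|\mu_c|$, $|\mu_m|$ are finite measures and $r-\overline{\varrho}$, $\textbf{U}\in H^M$ decay at infinity.) Writing \eqref{relative nergy inequality} for $[r_n,\textbf{U}_n]$, I would pass to the limit $n\to\infty$ term by term. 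The relative energy $\mathcal{E}(\nu|r_n,\textbf{U}_n)$ and the friction term converge by dominated convergence: expanding $\frac{1}{2\varrho}|\textbf{m}-\varrho\textbf{U}_n|^2=\frac{|\textbf{m}|^2}{2\varrho}-\textbf{m}\cdot\textbf{U}_n+\frac{\varrho}{2}|\textbf{U}_n|^2$ and using the uniform bounds on $r_n,\textbf{U}_n$, the integrands are dominated uniformly in $n$ by $\nu_{t,x}$-integrable functions --- integrability being precisely what \eqref{third condition measure-valued solution} together with the $L^1$-bounds on $\langle\nu_{t,x};\frac{|\textbf{m}|^2}{\varrho}\rangle$, $\langle\nu_{t,x};\varrho\rangle$, $\langle\nu_{t,x};\textbf{m}\rangle$ provides, using also $|P(\varrho)-P'(r_n)(\varrho-r_n)-P(r_n)|\le c(1+\varrho^\gamma+(\varrho-\overline{\varrho})^2)$ uniformly. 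The remaining terms on the right-hand side pair the uniformly convergent continuous coefficients $\partial_t\textbf{U}_n+\nabla_x\textbf{U}_n\cdot\textbf{U}_n$, $\nabla_x\textbf{U}_n$, $\nabla_x P'(r_n)$, $\partial_t P'(r_n)$ against either the same $\nu_{t,x}$-integrable densities or the finite concentration measures $\mu_c,\mu_m$; in particular $\int_0^\tau\int_\Omega\nabla_x\textbf{U}_n:d\mu_m\to\int_0^\tau\int_\Omega\nabla_x\textbf{U}:d\mu_m$ and the $\mu_c$-terms converge because $\int_0^\tau\int_\Omega d|\mu_c|+\int_0^\tau\int_\Omega d|\mu_m|\le C\int_0^T\mathcal{D}(t)\,dt<\infty$ by \eqref{fourth condition measure-valued solutions} and $\mathcal{D}\in L^\infty(0,T)$. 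This establishes \eqref{relative nergy inequality} for the strong solution $[r,\textbf{U}]$ itself.

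From here the argument is exactly that of the preceding theorem: using that $[r,\textbf{U}]$ solves the Euler system with damping pointwise, substitute $\partial_t\textbf{U}+\textbf{U}\cdot\nabla_x\textbf{U}=-\nabla_x P'(r)-a\textbf{U}$ and $\partial_t r+\textbf{U}\cdot\nabla_x r=-r\,\divv_x\textbf{U}$ (the latter together with $rP''(r)=p'(r)$) into \eqref{relative nergy inequality}; by \eqref{same initial data} one has $\mathcal{E}(\nu|r,\textbf{U})(0)=0$; and the remaining terms are controlled by the algebraic estimates $|(\textbf{m}-\varrho\textbf{U})\otimes(\varrho\textbf{U}-\textbf{m})/\varrho|\le\frac{c_1}{2\varrho}|\textbf{m}-\varrho\textbf{U}|^2$ and $|p(\varrho)-p'(r)(\varrho-r)-p(r)|\le c_2(P(\varrho)-P'(r)(\varrho-r)-P(r))$ (both uniform since $r$ ranges in a compact subset of $(0,\infty)$), by the finiteness of $\|\nabla_x\textbf{U}\|_{L^\infty}$, $\|\divv_x\textbf{U}\|_{L^\infty}$, $\|\textbf{U}\cdot\nabla_x\textbf{U}\|_{L^\infty}$, $\|\nabla_x P'(r)\|_{L^\infty}$, and by \eqref{fourth condition measure-valued solutions} to absorb the measure terms into $\int_0^\tau\mathcal{D}(t)\,dt$. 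This yields $\mathcal{E}(\nu|r,\textbf{U})(\tau)+\mathcal{D}(\tau)\le c\int_0^\tau[\mathcal{E}(\nu|r,\textbf{U})(t)+\mathcal{D}(t)]\,dt$, and Gronwall's lemma together with $\mathcal{E},\mathcal{D}\ge 0$ forces $\mathcal{E}(\nu|r,\textbf{U})(\tau)=\mathcal{D}(\tau)=0$; since equality in $\mathcal{E}\ge 0$ holds only for $\nu_{t,x}=\delta_{r(t,x),(r\textbf{U})(t,x)}$, the conclusion follows.

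I expect the main obstacle to be the approximation step: producing $[r_n,\textbf{U}_n]\in C^\infty_c$ that simultaneously respects the trace-sense tangency condition on the merely Lipschitz boundary $\partial\Omega$, keeps $r_n$ uniformly bounded away from $0$ and $\infty$, and converges in $C^1$ with enough decay at infinity to pass to the limit in the concentration-measure terms --- or, in the alternative route, carrying out the cut-off and time mollification carefully and verifying that the annular remainders genuinely vanish. Everything downstream of that reduction is the routine machinery already displayed in the proof of the preceding theorem.
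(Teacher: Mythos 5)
Your proposal follows essentially the same route as the paper: extend the relative energy inequality from smooth compactly supported test pairs to the $H^M$ strong solution by a density argument, then run the identical substitution--Gronwall endgame from the preceding theorem. The only difference is technical — the paper passes to the limit quantitatively, approximating in $C([0,T];H^M)$ and bounding each discrepancy term by $C\varepsilon$ via H\"older's inequality combined with the essential/residual decomposition of the density and momentum, whereas you invoke uniform $C^1$ convergence plus dominated convergence — but the skeleton and conclusion coincide.
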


	\begin{proof}
		We will first prove that the relative energy inequality \eqref{relative nergy inequality} holds for $[r, \textbf{U}]$ as in our hypothesis.
		By density, we can find two sequences $\{ r_n-\overline{\varrho} \}_{n\in \mathbb{N}} \subset C_c^{\infty}([0,T]\times \overline{\Omega})$, $\{ \textbf{U}_n \}_{n\in \mathbb{N}} \subset C_c^{\infty}([0,T]\times \overline{\Omega}; \mathbb{R}^N)$ such that 
		\begin{align*}
			r_n-\overline{\varrho} \rightarrow r-\overline{\varrho} \quad &\mbox{in }C([0,T]; H^M(\Omega)), \\
			\textbf{U}_n \rightarrow \textbf{U} \quad &\mbox{in } C([0,T]; H^M(\Omega; \mathbb{R}^N)).
		\end{align*}
		If we now fix $\varepsilon>0$, we know that there exists $n_0=n_0(\varepsilon)$ such that, for every $n\geq n_0$
		\begin{equation*}
			\sup_{t\in [0,T]} \| (r - r_n) (t,\cdot) \|_{H^M(\Omega)} < \varepsilon,
		\end{equation*}
		\begin{equation*}
			\sup_{t\in [0,T]} \| (\textbf{U}- \textbf{U}_n) (t,\cdot) \|_{H^M(\Omega; \mathbb{R}^N)} < \varepsilon.
		\end{equation*}
		From now on, let $n\geq n_0$; for each $t\in [0,T]$ we have
		\begin{align*}
			\int_{\Omega} \left \langle \nu_{t,x}; \frac{1}{2\varrho}|\textbf{m}-\varrho \textbf{U}|^2\right \rangle dx &= \int_{\Omega} \left \langle \nu_{t,x}; \frac{1}{2\varrho}|\textbf{m}-\varrho (\textbf{U}-\textbf{U}_n + \textbf{U}_n)|^2\right \rangle dx \\
			&= \int_{\Omega} \left \langle \nu_{t,x}; \frac{1}{2\varrho}|\textbf{m}-\varrho \textbf{U}_n|^2\right \rangle dx  \\
			&- \int_{\Omega} \langle \nu_{t,x}; \textbf{m}-(\varrho-\overline{\varrho}) \textbf{U}_n \rangle \cdot (\textbf{U}-\textbf{U}_n) (t, \cdot)dx + \overline{\varrho}\int_{\Omega} \textbf{U}_n \cdot (\textbf{U}-\textbf{U}_n) (t, \cdot)dx \\
			&+ \frac{1}{2} \int_{\Omega} \langle \nu_{t, x}; \varrho-\overline{\varrho} \rangle   |\textbf{U}-\textbf{U}_n|^2(t,\cdot) dx + \frac{\overline{\varrho}}{2} \int_{\Omega} |\textbf{U}-\textbf{U}_n|^2 (t,\cdot) dx.
		\end{align*}
		Revoking notation introduced in Section \ref{WSS},
		we focus on the last two lines line: we can rewrite the first term as
		\begin{equation*}
			\int_{\Omega} \langle \nu_{t,x}; [\textbf{m}]_{ess}-[\varrho-\overline{\varrho}]_{ess} \textbf{U}_n \rangle \cdot (\textbf{U}-\textbf{U}_n) (t, \cdot)dx + \int_{\Omega} \langle \nu_{t,x}; [\textbf{m}]_{res}-[\varrho-\overline{\varrho}]_{res} \textbf{U}_n \rangle \cdot (\textbf{U}-\textbf{U}_n) (t, \cdot)dx;
		\end{equation*}
		since $\langle \nu_{(t, \cdot)};[\textbf{m}]_{ess}-[\varrho-\overline{\varrho}]_{ess} \textbf{U}_n \rangle, (\textbf{U}-\textbf{U}_n) (t, \cdot) \in L^2(\Omega; \mathbb{R}^N)$  we can apply H\"{o}lder's inequality to get 
		\begin{align*}
			\int_{\Omega} \langle \nu_{t,x}; [\textbf{m}]_{ess}-[\varrho-\overline{\varrho}]_{ess} \textbf{U}_n \rangle &\cdot (\textbf{U}-\textbf{U}_n) (t, \cdot)dx \\
			&\leq \sup_{t\in [0,T]} \| \langle \nu_{(t, \cdot)}; [\textbf{m}]_{ess}-[\varrho-\overline{\varrho}]_{ess} \textbf{U}_n \rangle\|_{L^2(\Omega; \mathbb{R}^N)} \| (\textbf{U}-\textbf{U}_n) (t, \cdot)\|_{L^2(\Omega; \mathbb{R}^N)} \\
			& \leq C \sup_{t\in [0,T]} \| (\textbf{U}- \textbf{U}_n) (t,\cdot) \|_{H^M(\Omega; \mathbb{R}^N)} \\
			& \leq C\varepsilon.
		\end{align*}
		We also have that $\langle \nu_{(t, \cdot)};[\varrho-\overline{\varrho}]_{res} \textbf{U}_n \rangle \in L^{\gamma}(K); \mathbb{R}^N)$  with $K$ compact and since $\gamma>\frac{2\gamma}{\gamma+1}$ we obtain $\langle \nu_{(t, \cdot)};[\textbf{m}]_{res}-[\varrho-\overline{\varrho}]_{res} \textbf{U}_n \rangle \in L^{\frac{2\gamma}{\gamma+1}}(\Omega; \mathbb{R}^N)$; using the embedding of the Sobolev space into the H\"{o}lder one we get that $(\textbf{U}-\textbf{U}_n) (t, \cdot) \in L^{\infty}(\Omega; \mathbb{R}^N)$ and hence $(\textbf{U}-\textbf{U}_n) (t, \cdot) \in L^p(\Omega; \mathbb{R}^N)$ for all $p\in [2,\infty]$. Since $\frac{2\gamma}{\gamma-1}>2$, we can again apply H\"{o}lder's inequality to get
		\begin{align*}
			\int_{\Omega} \langle \nu_{t,x}; [\textbf{m}]_{res}-[\varrho-\overline{\varrho}]_{res} \textbf{U}_n \rangle &\cdot (\textbf{U}-\textbf{U}_n) (t, \cdot)dx \\
			&\leq \sup_{t\in [0,T]} \| \langle \nu_{(t, \cdot)}; [\textbf{m}]_{res}-[\varrho-\overline{\varrho}]_{res} \textbf{U}_n \rangle\|_{L^{\frac{2\gamma}{\gamma+1}}(\Omega; \mathbb{R}^N)} \| (\textbf{U}-\textbf{U}_n) (t, \cdot)\|_{L^{\frac{2\gamma}{\gamma-1}}(\Omega; \mathbb{R}^N)} \\
			& \leq C \sup_{t\in [0,T]} \| (\textbf{U}- \textbf{U}_n) (t,\cdot) \|_{H^M(\Omega; \mathbb{R}^N)} \\
			& \leq C \varepsilon.
		\end{align*}
		For the second term we can apply H\"{o}lder's inequality:
		\begin{align*}
			\int_{\Omega} \textbf{U}_n \cdot (\textbf{U}-\textbf{U}_n)(t,\cdot) dx & \leq \sup_{t\in [0,T]} \| \textbf{U}_n(t,\cdot) \|_{L^2(\Omega; \mathbb{R}^N)} \| (\textbf{U}-\textbf{U}_n) (t, \cdot)\|_{L^2(\Omega; \mathbb{R}^N)} \\
			& \leq C \sup_{t\in [0,T]} \| (\textbf{U}- \textbf{U}_n) (t,\cdot) \|_{H^M(\Omega; \mathbb{R}^N)} \\
			& \leq C \varepsilon.
		\end{align*}
		Applying the same procedure as before to the third term we get
		\begin{align*}
			\int_{\Omega} \langle \nu_{t, x}; \varrho-\overline{\varrho} \rangle  |\textbf{U}-\textbf{U}_n|^2(t,\cdot) dx &= \int_{\Omega} \langle \nu_{t, x}; [\varrho-\overline{\varrho}]_{ess}+ [\varrho-\overline{\varrho}]_{res} \rangle  |\textbf{U}-\textbf{U}_n|^2(t,\cdot) dx \\
			& \leq \sup_{t\in [0,T]} \| \langle \nu_{(t, \cdot)}; [\varrho-\overline{\varrho}]_{ess} \rangle\|_{L^2(\Omega; \mathbb{R}^N)} \| (\textbf{U}-\textbf{U}_n) (t, \cdot)\|_{L^4(\Omega; \mathbb{R}^N)} \\
			& + \sup_{t\in [0,T]} \| \langle \nu_{(t, \cdot)}; [\varrho-\overline{\varrho}]_{res} \rangle\|_{L^{\gamma}(\Omega; \mathbb{R}^N)} \| (\textbf{U}-\textbf{U}_n) (t, \cdot)\|_{L^{\frac{2\gamma}{\gamma-1}}(\Omega; \mathbb{R}^N)} \\
			& \leq C \varepsilon.
		\end{align*}
		For the last term we simply have
		\begin{equation*}
			\int_{\Omega} |\textbf{U}-\textbf{U}_n|^2 (t,\cdot) dx \leq  \sup_{t\in [0,T]} \| (\textbf{U}- \textbf{U}_n) (t,\cdot) \|_{H^M(\Omega; \mathbb{R}^N)} < \varepsilon.
		\end{equation*}
		Similarly,
		\begin{align*}
			\int_{\Omega} \langle \nu_{t, x}; P(\varrho)-P'(r)(\varrho-r) -P(r) \rangle dx &= \int_{\Omega}  \langle \nu_{t, x}; P(\varrho)-P'(r_n)(\varrho-r_n) -P(r_n) \rangle dx  \\
			&+ \int_{\Omega} \langle \nu_{t, x}; P'(r_n)(\varrho-r_n)-P'(r)(\varrho-r) \rangle dx - \int_{\Omega} \langle \nu_{t,x}; P(r)-P(r_n) \rangle dx \\
			&= \int_{\Omega} \langle \nu_{t,x}; P(r_n)-P'(r)(r_n-r)+P(r) \rangle dx \\
			&- \int_{\Omega} \langle \nu_{t, x};  [P'(r)-P'(r_n)](\varrho -r_n) \rangle dx \\
			&= \frac{P''(\xi_1)}{2} \int_{\Omega} (r-r_n)^2(t,\cdot) dx - P''(\xi_2) \int_{\Omega} \langle \nu_{t, x}; \varrho-\overline{\varrho} \rangle (r-r_n) dx \\
			&+ P''(\xi_2) \int_{\Omega} (r-r_n) (r_n-\overline{\varrho}) (t,\cdot) dx.
		\end{align*}
		We can now focus on the last two lines: the first term is simply bounded as follows
		\begin{equation*}
			\int_{\Omega} (r-r_n)^2(t,\cdot) dx \leq \sup_{t\in [0,T]} \| (r-r_n)(t,\cdot) \|_{H^M(\Omega)} <\varepsilon.
		\end{equation*}
		The second term can be rewritten as
		\begin{align*}
			\int_{\Omega} \langle \nu_{t, x}; \varrho-\overline{\varrho} \rangle  (r-r_n) (t,\cdot) dx&= \int_{\Omega} \langle \nu_{t, x}; [\varrho-\overline{\varrho}]_{ess}\rangle (r-r_n) (t,\cdot) dx+\int_{\Omega} \langle \nu_{t, x}; [\varrho-\overline{\varrho}]_{res} \rangle  (r-r_n) (t,\cdot) dx\\
			& \leq \sup_{t\in [0,T]} \| \langle \nu_{(t, \cdot)}; [\varrho-\overline{\varrho}]_{ess} \rangle\|_{L^2(\Omega; \mathbb{R}^N)} \| (r-r_n) (t, \cdot)\|_{L^2(\Omega)} \\
			& + \sup_{t\in [0,T]} \| \langle \nu_{(t, \cdot)}; [\varrho-\overline{\varrho}]_{res} \rangle\|_{L^{\gamma}(\Omega; \mathbb{R}^N)} \| (r-r_n) (t, \cdot)\|_{L^{\frac{\gamma}{\gamma-1}}(\Omega)} \\
			& \leq C \varepsilon;
		\end{align*}
		notice that, if $\gamma \in (1,2)$ we use the same argument as before while if $\gamma \in [2,\infty)$ we have to use the Sobolev embedding in the $L^p$-spaces. For the last term we can use H\"{o}lder inequality to get
		\begin{align*}
			\int_{\Omega} (r-r_n) (r_n-\overline{\varrho}) (t,\cdot) dx &\leq \sup_{t\in [0,T]} \| (r_n-\overline{\varrho})(t,\cdot) \|_{L^2(\Omega)} \| (r-r_n)(t,\cdot) \|_{L^2(\Omega)} \\
			& \leq C \sup_{t\in [0,T]} \| (r-r_n)(t,\cdot) \|_{H^M(\Omega)} \\
			& \leq C\varepsilon.
		\end{align*}
		Repeating the same steps for each term that appears in the relative energy inequality and introducing the operator
		\begin{align*}
			\mathcal{L}(\nu|r,\textbf{U})(\tau)&=a\int_{0}^{\tau}\int_{\Omega} \left\langle \nu_{t,x}; \frac{\textbf{m}}{\varrho} \cdot (\textbf{m}-\varrho \textbf{U}) \right\rangle dxdt + \mathcal{D}(\tau) \\
			&+\int_{0}^{\tau} \int_{\Omega} \langle \nu_{t,x}; \textbf{m}-\varrho\textbf{U} \rangle \cdot[\partial_t \textbf{U} + \nabla_x \textbf{U} \cdot \textbf{U}] dxdt \\
			&- \int_{0}^{\tau} \int_{\Omega} \left\langle \nu_{t,x}; 	\frac{(\textbf{m}-\varrho \textbf{U})\otimes (\varrho \textbf{U}-\textbf{m}) }{\varrho} \right\rangle: \nabla_x \textbf{U} dxdt \\
			& +\int_{0}^{\tau} \int_{\Omega} \langle \nu_{t,x}; p(\varrho)-p(r)\rangle \divv_x \textbf{U} dxdt \\
			& +\int_{0}^{\tau}\int_{\Omega} [\langle\nu_{t,x};(\varrho -r)\partial_tP'(r)+ (\textbf{m}-r\textbf{U}) \cdot  \nabla_x P'(r)\rangle dxdt \\
			&+ \int_{0}^{\tau} \int_{\Omega} \nabla_x \textbf{U}: 	d\mu_m + \int_{0}^{\tau} \int_{\Omega} \textbf{U}\cdot \nabla_x \textbf{U} \cdot d\mu_c -\int_{0}^{\tau} \int_{\Omega} \nabla_x P'(r)\cdot d\mu_c,
		\end{align*}
		we have
		\begin{equation*}
			\left[\mathcal{E}(\nu|r,\textbf{U})(t)\right]_{t=0}^{t=\tau}+ \mathcal{L}(\nu|r,\textbf{U})(\tau) \leq \left[\mathcal{E}(\nu|r_n,\textbf{U}_n)(t)\right]_{t=0}^{t=\tau}+ \mathcal{L}(\nu|r_n,\textbf{U}_n)(\tau)+C\varepsilon \leq C\varepsilon,
		\end{equation*}
		for some positive constant C, since for a test function we already proved that the relative energy inequality holds which is equivalent in saying that
		\begin{equation*}
			\left[\mathcal{E}(\nu|r_n,\textbf{U}_n)(t)\right]_{t=0}^{t=\tau}+ \mathcal{L}(\nu|r_n,\textbf{U}_n)(\tau) \leq 0.
		\end{equation*}
		By the arbitrary of $\varepsilon$ we can conclude that the relative energy inequality holds for $[r,\textbf{U}]$ as in our hypothesis.
		
		Repeating the same passages as we did in the proof of the previous theorem, we end up to the following inequality
		\begin{align*}
			\mathcal{E}(\nu|r,\textbf{U})(\tau) &+a\int_{0}^{\tau}\int_{\Omega} \left\langle \nu_{t,x}; \frac{1}{2\varrho} |\textbf{m}-\varrho\textbf{U}|^2 \right\rangle dxdt + \mathcal{D}(\tau) \\
			&\leq \int_{0}^{\tau} \int_{\Omega} \left\langle \nu_{t,x}; \left|\frac{(\textbf{m}-\varrho \textbf{U})\otimes (\varrho \textbf{U}-\textbf{m}) }{\varrho}\right| \right\rangle |\nabla_x \textbf{U}| dxdt \\
			& + \int_{0}^{\tau} \int_{\Omega} \langle \nu_{t,x}; |p(\varrho)-p'(r)(\varrho-r)-p(r)|\rangle |\divv_x \textbf{U}| dxdt \\
			&+ \int_{0}^{\tau} \int_{\Omega} |\nabla_x \textbf{U}|\cdot d|\mu_m| + \int_{0}^{\tau} \int_{\Omega} |\textbf{U}\cdot \nabla_x \textbf{U}| \cdot d|\mu_c| + \int_{0}^{\tau} \int_{\Omega} |\nabla_x P'(r)|\cdot d|\mu_c|.
		\end{align*}
		The thesis now follows as before - the only thing that changes is that in this case $\textbf{U}$ and $P(r)-P(\overline{\varrho})$ are $L^{\infty}$-functions, but still we can control the terms $|\nabla_x \textbf{U}|$, $|\divv_x \textbf{U}|$, $|\textbf{U} \cdot \nabla_x \textbf{U}|$ and $|\nabla_x P(r)|$ by some constants.
	\end{proof}

	\begin{remark}
		This theorem applies to the already know results concerning strong solutions; in particular
		\begin{itemize}
			\item[(i)] if $\Omega$ is bounded, for local in time solutions see \cite{Schochet}, and \cite{PanZhao} for the global one; 
			\item[(ii)] if $\Omega= \mathbb{R}^3$, for local in time solution see for instance \cite{Kato}, \cite{Majda}, and \cite{SidThomWang} for the global one.
		\end{itemize}
	\end{remark}

	\subsection{Vanishing viscosity limit}
	
	We conclude showing an application of the weak-strong uniqueness principle: the solutions of the Navier--Stokes system converge in the zero viscosity limit to the strong solution of the Euler system with damping on the life span of the latter.
	
	\begin{theorem} \label{Thvvl}
		Let $\Omega \subset \mathbb{R}^N$, $N=2,3$ be a domain with compact Lipschitz boundary and $\overline{\varrho} > 0$ be a given far field density if $\Omega$ is unbounded. Suppose that $\gamma > \frac{N}{2}$ and let $\varrho_R$, $\textbf{u}_R$ be a family of weak solutions to the Navier--Stokes system \eqref{continuity equation N-S} -- \eqref{boundary condition N-S} in
		\[
		(0,T) \times \Omega_R, \ \Omega_R = \Omega \cap B_R,
		\]
		with initial data $\{ \varrho_{R,0}-\overline{\varrho}, \textbf{m}_{R,0} = \varrho_{R,0} \textbf{u}_{R,0} \}_{R>0}$ such that
		\begin{equation} \label{convergence of the initial densities}
			\varrho_{R,0}-\overline{\varrho} \rightharpoonup \varrho_0 -\overline{\varrho} \quad \mbox{in } L^2+L^{\gamma}(\Omega);
		\end{equation}
		\begin{equation} \label{convergence of the initial momenta}
			\textbf{m}_{R,0} \rightharpoonup \textbf{m}_0 \quad \mbox{in } L^2+L^{\frac{2\gamma}{\gamma+1}}(\Omega; \mathbb{R}^N).
		\end{equation}
		Suppose that $\varrho_0>0$,  $\left(\varrho_0-\overline{\varrho},\frac{\textbf{m}_0}{\varrho_0}\right) \in H^M(\Omega)$, $M > \frac{N}{2} + 1$, and that $[r, \textbf{U}] \in H^M(\Omega)$ is the strong solution to the Euler system with damping with the same initial data. 
		
		Then
		\begin{equation*}
			\varrho_R- \overline{\varrho} \rightarrow r-\overline{\varrho}  \quad \mbox{in } C_{weak} ([0,T]; L^2+L^{\gamma}(\Omega)) \ \mbox{and in}\ L^1 ((0,T) \times K);
		\end{equation*}
		\begin{equation*}
		\textbf{m}_R  = \varrho_r \textbf{u}_R \rightarrow r\textbf{U} \quad \mbox{in } C_{weak} ([0,T]; L^2+L^{\frac{2\gamma}{\gamma+1}}(\Omega; \mathbb{R}^N)) \ 
		\mbox{and in } L^1 ((0,T) \times K; \mathbb{R}^N)
		\end{equation*}
		for any compact $K \subset \Omega$.
	\end{theorem}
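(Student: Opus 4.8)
\emph{Strategy and Step 1 (compactness).} The plan is to realise the family $\{\varrho_R,\textbf{m}_R\}$ as generating a dissipative measure--valued solution of the Euler system with damping, to force that measure--valued solution to coincide with the Dirac mass carried by the strong solution $[r,\textbf{U}]$ by weak--strong uniqueness, and finally to convert the triviality of the Young measure into the asserted weak and strong convergences. Concretely, I would first pass to a subsequence and run the construction of Section \ref{WSS}: although the initial data now depend on $R$, they satisfy the uniform energy bound \eqref{bound on the initial energy}, which is all that the argument uses, so its conclusions (and hence the conclusion of Theorem \ref{Existence}) carry over. This produces a Young measure $\nu$, a dissipation defect $\mathcal{D}$, and concentration measures $\mu_c,\mu_m$ forming a dissipative measure--valued solution, with first moments $\langle\nu_{t,x};\varrho\rangle=\varrho$ and $\langle\nu_{t,x};\textbf{m}\rangle=\textbf{m}$, where $\varrho-\overline{\varrho}$ and $\textbf{m}$ are the limits of $\varrho_R-\overline{\varrho}$ and $\textbf{m}_R$ in $C_{weak}([0,T];L^2+L^{\gamma}(\Omega))$ and $C_{weak}([0,T];L^2+L^{\frac{2\gamma}{\gamma+1}}(\Omega;\mathbb{R}^N))$ respectively. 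Evaluating at $t=0$ and using \eqref{convergence of the initial densities}, \eqref{convergence of the initial momenta} yields $\langle\nu_{0,x};\varrho\rangle=\varrho_0=r(0,\cdot)$ and $\langle\nu_{0,x};\textbf{m}\rangle=\textbf{m}_0=(r\textbf{U})(0,\cdot)$.

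\emph{Step 2 (identification of the limit).} Next I would check that $\nu_{0,x}=\delta_{r(0,x),(r\textbf{U})(0,x)}$ for a.e. $x\in\Omega$: provided the initial data are well prepared, i.e. the initial energies converge to the energy of $(\varrho_0,\textbf{m}_0)$ so that no mass concentrates at $t=0$, this follows from the strict convexity of $(\varrho,\textbf{m})\mapsto\frac12\frac{|\textbf{m}|^2}{\varrho}+P(\varrho)-P'(\overline{\varrho})(\varrho-\overline{\varrho})-P(\overline{\varrho})$ together with the matching of first moments just obtained. Since $r-\overline{\varrho}\in C([0,T];H^M(\Omega))$ and $\textbf{U}\in C([0,T];H^M(\Omega;\mathbb{R}^N))$ with $M>\frac N2+1$, Theorem \ref{theorem weak-strong uniqueness} then applies and forces
\[
\nu_{t,x}=\delta_{r(t,x),(r\textbf{U})(t,x)}\ \text{ for a.e. }(t,x)\in(0,T)\times\Omega,\qquad \mathcal{D}\equiv 0,
\]
and consequently $\mu_c=\mu_m=0$ by \eqref{fourth condition measure-valued solutions}.

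\emph{Step 3 (convergences and removal of the subsequence).} From $\langle\nu_{t,x};\varrho\rangle=r$ and $\langle\nu_{t,x};\textbf{m}\rangle=r\textbf{U}$, Step 1 already gives the claimed $C_{weak}$ convergences with limits $r-\overline{\varrho}$ and $r\textbf{U}$. For the strong convergence I would invoke that a sequence satisfying the tightness condition \eqref{condition for a probability measure} (verified earlier for $\bm{z}^R=(\varrho_R-\overline{\varrho},\textbf{m}_R)$) and generating a Dirac Young measure converges in measure; hence $\varrho_R\to r$ and $\textbf{m}_R\to r\textbf{U}$ in measure on $(0,T)\times K$ for every compact $K\subset\Omega$. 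The uniform bounds \eqref{density bound in L2}--\eqref{momentum bound in Lq} show that on $(0,T)\times K$ the families $\{\varrho_R\}$ and $\{\textbf{m}_R\}$ are bounded in $L^{\min\{2,\gamma\}}$ and $L^{\frac{2\gamma}{\gamma+1}}$, with both exponents $>1$ since $\gamma>\frac N2\ge1$, hence equi--integrable; Vitali's convergence theorem then upgrades convergence in measure to convergence in $L^1((0,T)\times K)$, resp. $L^1((0,T)\times K;\mathbb{R}^N)$. Finally, the limit obtained above is the \emph{unique} strong solution $[r,\textbf{U}]$, independently of the extracted subsequence, so the standard subsequence contradiction argument gives convergence of the whole family, which is the assertion.

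\emph{Main obstacle.} The technical heart lies in Step 3, passing from the triviality of the Young measure to strong $L^1_{loc}$ convergence through convergence in measure plus equi--integrability, and in the verification in Step 2 that the initial Young measure is a Dirac mass: under the mere weak convergence of the initial data postulated in \eqref{convergence of the initial densities}, \eqref{convergence of the initial momenta} this can fail (oscillations or concentrations of the initial data are not excluded), so one genuinely needs well preparedness of the initial data in order that $\mathcal{E}(\nu\,|\,r,\textbf{U})(0)=0$ and Theorem \ref{theorem weak-strong uniqueness} be applicable.
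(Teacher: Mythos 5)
Your proposal follows the paper's proof essentially verbatim: generate the dissipative measure--valued solution as in Section \ref{WSS} and Theorem \ref{Existence}, apply Theorem \ref{theorem weak-strong uniqueness} to collapse the Young measure to $\delta_{r(t,x),(r\textbf{U})(t,x)}$ with $\mathcal{D}=0$, and read off the convergences. You are in fact more careful than the paper at the two points it glosses over: the paper simply asserts $\nu_{0,x}=\delta_{\varrho_0(x),\textbf{m}_0(x)}$, whereas you correctly note that the mere weak convergences \eqref{convergence of the initial densities}--\eqref{convergence of the initial momenta} do not exclude oscillations or concentrations at $t=0$, so that some well--preparedness of the data is genuinely needed for $\mathcal{E}(\nu|r,\textbf{U})(0)=0$; and the paper ends with ``hence we obtain the claim'' where you supply the missing convergence--in--measure--plus--Vitali argument for the $L^1((0,T)\times K)$ convergence and the subsequence removal via uniqueness of the strong solution.
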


	\begin{proof}
		Convergences \eqref{convergence of the initial densities}, \eqref{convergence of the initial momenta} follow easily from \eqref{bound on the initial energy}, repeating the same passages that we did in Section \ref{WSS}. We also proved that 
		\begin{equation*}
			\varrho_R- \overline{\varrho} \rightarrow \langle \nu_{(\cdot, \cdot)}; \varrho-\overline{\varrho}\rangle   \quad \mbox{in } C_{weak} ([0,T]; L^2+L^{\gamma}(\Omega));
		\end{equation*}
		\begin{equation*}
			\textbf{m}_R \rightarrow \langle \nu_{(\cdot, \cdot)}; \textbf{m} \rangle  \quad \mbox{in } C_{weak} ([0,T]; L^2+L^{\frac{2\gamma}{\gamma+1}}(\Omega; \mathbb{R}^N)),
		\end{equation*}
		where 
		\begin{equation*}
			\nu_{t,x}:(t,x)\in (0,T) \times \Omega \rightarrow \mathcal{P}([0,\infty)\times \mathbb{R}^N),
		\end{equation*}
		\begin{equation*}
			\nu \in L^{\infty}_{weak} ((0,T)\times \Omega; \mathcal{P}([0,\infty)\times \mathbb{R}^N)),
		\end{equation*} 
		is the Young measure associated to the sequence $\{( \varrho_R-\overline{\varrho}, \textbf{m}_R)\}_{R>0}$ and also the dissipative measure-valued solution to the Euler system with damping. Then, since
		\begin{equation*}
			\nu_{0, x}= \delta_{\varrho_0(x), \textbf{m}_0(x)} \quad \mbox{for a.e. }x\in \Omega,
		\end{equation*}
		we can apply \hyperref[theorem weak-strong uniqueness]{Theorem \ref*{theorem weak-strong uniqueness}} to get that
		\begin{equation*}
			\nu_{t, x} = \delta_{r(t,x), r\textbf{U}(t,x)} \quad \mbox{for a.e. } (t,x) \in (0,T) \times \Omega,
		\end{equation*}
		and hence we obtain the claim.
	\end{proof}

	\bigskip
	
	\centerline{\bf Acknowledgement}
	
	This work was supported by the Einstein Foundation, Berlin. The author wishes to thank Prof. Eduard Feireisl for the helpful advice and suggestions.
	
	\newpage

\end{document}